\let\amsamp=&
\newtheorem{thm}{Theorem}[section]         % 1st argument is your name for it
\newtheorem{prop}[thm]{Proposition}%[section] % 2nd argument is what is printed
\theoremstyle{definition}
\numberwithin{equation}{section}
\renewcommand{\tilde}[1]{\widetilde{#1}}%
\renewcommand{\AA}{\mathcal{A}}
\newcommand{\BB}{\mathcal{B}}
\renewcommand{\SS}{\mathcal{S}}
\renewcommand{\SS}{\mathcal{S}}
\newcommand{\LL}{\mathcal L}
\newcommand{\MM}{\mathcal M}
\newcommand{\TT}{\mathcal T}
\newcommand{\N}{\mathbb N}
\newcommand{\Q}{\mathbb Q}
\newcommand{\Z}{\mathbb Z}
\newcommand{\R}{\mathbb R}
\renewcommand{\H}{\mathbb{H}}
\newcommand{\wS}{\TT}
\newcommand{\wA}{\BB}
\newcommand{\F}{\mathbb{F}}
\renewcommand{\Re}{\operatorname{Re}}
\newcommand{\llangle}{\langle\! \langle}
\newcommand{\rrangle}{\rangle\! \rangle}
\newcommand{\tT}{\tilde{T}}
\newcommand{\tOmega}{\tilde{\Omega}}
\newcommand{\wRhoe}{\sigma_e}
\newcommand{\gplus}{\gamma_{\infty}}
\newcommand{\gneg}{\gamma_{-\infty}}
\newcommand{\e}{\epsilon}
\newcommand{\x}{\xi_\gamma}
\newcommand{\y}{\eta_\gamma}
\newcommand{\PSL}{\operatorname{PSL}(2,\Z)}
\newcommand{\sgn}{\operatorname{sign}}
\title{Geodesic flows and slow downs of continued fraction maps}
\author{Claire Merriman}
\begin{document}
\begin{abstract}
    The connection between cutting sequences of geodesics on the modular surface $\PSL\backslash\H$ and regular continued fractions was established by Series. Heersink expanded the cross-section of the geodesic flow on the unit tangent bundle to the modular surface to describe the Farey tent-map as a slowdown of the Gauss map for the regular continued fractions. Boca and the author expanded the connection between cutting sequences of geodesics on the modular surface $\Theta\backslash\H$ and even continued fractions, which was previously established as a billiard flow by Bauer and Lopes. We will similarly expand the cross-section of the geodesic flow on this unit tangent bundle to describe the three-branch slowdown of the even Farey map. 
\end{abstract}

\maketitle
%%%%%%%%%%%%%%%%%%%%%%%%%%%%%

%%%%%%%%%%%%%%%%%%%%%%%%%%%%%
\section{Introduction}\label{intro}
%%%%%%%%%%%%%%%%%%%%%%%%%%%%%

Caroline Series \cite{Ser} established explicit connections between the geodesics on $\MM=\PSL\backslash\H$, geodesic codings, and regular continued fraction dynamics. The connection between geodesics on the modular surface $\MM$ and continued fractions was previously established by Artin \cite{Art} who used continued fractions to show the existence of dense geodesics on $\MM$. Series' construction defines a cross-section of the geodesic flow on $T^1\MM$ whose first return map provides a double cover of the natural extension of the Gauss map, as well as defining a cutting sequence which explicitly describes the relationship between between geodesics on $\MM$ and the (regular) continued fraction expansion of the endpoints of the lifts of the geodesic to $\H$. Heersink \cite{Hee} expanded the cross-section to provide a double cover of the natural extension of the Farey map, which is a slowdown of the Gauss map. 

Along with Boca, the author established similar connections between modular surfaces, geodesic codings, the even continued fractions, and the odd continued fractions \cite{BM1}. The even continued fractions provide a classification of geodesics on 
$\MM_e=\Theta\backslash\H$, where 
\[\Theta=\{\left(\begin{smallmatrix}
    a & b\\ c& d
\end{smallmatrix}\right)
\in\PSL:a\equiv d\!\pmod 2,\ b\equiv c\!\pmod 2,\ a\not\equiv b\!\pmod 2\}.\]
The $\Theta-$group was previous used to describe the even continued fractions as a billiard flow by Bauer and Lopes \cite{BL}, while Kraaikamp and Lopes \cite{KL} use the even Gauss map to prove the asymptotic growth of the length of trajectories of the $\Theta-$group. 

The goal of this paper is to study the slowdown of the even Gauss map, which we will call the even Farey map, and then expand the cross-section of the geodesic flow on $T^1\MM_e$ so that the first-return map is a double cover of the natural extension of the even Farey map. We will then find an invariant measure of the natural extension of the even Farey map and show that this gives the invariant measure of the even Farey map from , and prove that these systems are ergodic. 

The even Farey map has been studied in context of the even Gauss map by Boca and Linden \cite{BocaL}. It has also appeared in the study of
Pythagorean triples by Romik \cite{Rom}, and by Aaronson and Denker \cite{AaD} as a shift on the intervals $(0,\tfrac{1}{3}),(\tfrac{1}{3},\tfrac{1}{2}),(\tfrac{1}{2},1)$. Both papers realize the map as a factor of the first return map to a cross-section of the geodesic flow on the unit tangent bundle to $\Gamma(2)\backslash\H$, where $\Gamma(2)=\{M\in\PSL : M\equiv I_2 \pmod 2\}$ is an index 2 subgroup of $\Theta$. However, the natural extension of the even Farey map and the use of the ergodicity of the geodesic flow to establish ergodic properties are new.

In Section \ref{series}, we provide an overview of the dynamics of the regular continued fractions and the Gauss map, as well as a detailed summary of Series' cutting sequence construction. We also give an example of the cutting sequence on the modular surface in Figure \ref{modboat_figure}, further clarifying the connection between the cutting sequence and the modular surface. We believe that the image of the cutting sequences on the modular surface is new. In Section \ref{farey}, we introduce the Farey map and Heersink's cross-section. We also give an explicit description of the M\"obius transformation on $\H$ which induces the first return to the cross-section of the geodesic flow on $T^1\MM$ and of the action on the cutting sequences and the regular continued fraction expansion of the endpoints of the geodesics. 

In Section \ref{ecf_gauss}, we define the even continued fraction Gauss map and its natural extension, define a new tessellation of $\H$, and modify the cutting sequence description from \cite{BM1} to more closely correspond to the behavior of geodesics on $\MM_e$.

In Section \ref{ecf_farey}, we define the even Farey map and its natural extension. We then expand the cross-section from Section \ref{ecf_gauss}, realizing the first return map as a map induced by M\"obius transformations on $\H$. Section \ref{diagram} provides a relationship between the construction in Section \ref{farey} and the author's description of the slow continued fractions using similar methods \cite{Mer}.

%%%%%%%%%%%%%%%%%%%%%%%%%%%%%
\section{Farey map and regular continued fractions}\label{regular}
%%%%%%%%%%%%%%%%%%%%%%%%%%%%%
\subsection{Regular continued fractions and cutting sequences}\label{series}
The regular continued fraction expansion of $x>0$ is
\begin{equation*}
x= n_0+\cfrac{1}{n_1+\cfrac{1}{n_2+\dots}}
=[n_0;n_1,n_2,\ldots],\quad n_i\in\Z,\ n_0\geq 0,\ n_i>0 \textnormal{ for } i>0.
\end{equation*}
The regular continued fraction expansion of irrational numbers is unique, and there are two expansions of rational numbers coming from the fact that $n=n-1+\frac{1}{1}$.

The \emph{Gauss map} $G:[0,1)\to[0,1)$ given by \[
    G(x)=\begin{cases}
        \frac{1}{x}-\left\lfloor\frac{1}{x}\right\rfloor, & x\neq0 \\
        0, & x=0
    \end{cases}=\begin{cases}
        \frac{1}{x}-k, & x\in\left(\frac{1}{k+1},\frac{1}{k}\right] \\
        0, & x=0,
    \end{cases}\]
generates the regular continued fraction expansion of $x\in[0,1)$, and 
\[G[0;n_1,n_2,n_3,\dots]=[0;n_2,n_3,\dots].\]

The natural extension of the Gauss map is a two dimensional, invertible extension of the Gauss map. For the regular continued fractions, the natural extension is $\overline{G}:[0,1)^2\to[0,1)^2$ by 
\[
    \overline{G}(x,y)=\begin{cases}
        \left(\frac{1}{x}-\left\lfloor\frac{1}{x}\right\rfloor,\frac{1}{\left\lfloor\frac{1}{x}\right\rfloor+y}\right), & x\neq0 \\
        (0,y), & x=0
    \end{cases}=\begin{cases}
        \left(\frac{1}{x}-k,\frac{1}{k+y}\right), & x\in\left(\frac{1}{k+1},\frac{1}{k}\right] \\
        (0,y), & x=0,
    \end{cases}\]
and \[\overline{G}([0;n_1,n_2,n_3\dots],[0;n_0,n_{-1},\dots])=([0;n_2,n_3,\dots],[0;n_1,n_0,n_{-1},\dots]).\]
Series \cite{Ser} gave an explicit construction of the connection between regular continued fraction expansions of real numbers, cutting sequences of geodesics on the upper half plane, and the geodesic flow on the unit tangent bundle of the modular surface. Since we will be modifying this construction, we start with a summary of Series' results.

First, define the \emph{Farey tessellating $\F$} on the upper half plane $\H$ as the tessellation whose edges are given by $i\R$ under $\PSL$ acting by M\"obius transformations. The fundamental Farey cell is the ideal triangle $\{0,1,\infty\},$ which is a threefold cover of the nonstandard fundamental Dirichlet region 
$\mathfrak{F}=\{z\in\H:0\leq \operatorname{Re}z\leq 1, |z-\tfrac{1}{2}|\leq\tfrac{1}{2}\}.$
 Let $\MM=\PSL\backslash\H$, which is a trice punctured sphere with a cusp at $\pi(\infty)$ and cone points at $\pi(i)$ and $\pi(\frac{1+i\sqrt{3}}{2})$. Then the edges of the Farey tessellation project to the line $S$ which runs from the cusp $\pi(\infty)$  to $\pi(i)$ and back.

Let $\SS=\pm\big((1,\infty)\times(-1,0)\big)$, and let $\AA$ be the set of geodesics in $\H$ with endpoints $(\gplus,\gneg)\in\SS$.
An oriented geodesic $\gamma$ is cut into segments by the ideal triangles in $\F$. The two sides of an ideal triangle which cut $\gamma$ meet at a vertex that is either on the right or the left of $\gamma$, as in Figure \ref{farey_tes}. We label the corresponding segment of $\gamma$ with $R$ or $L$, respectively. On $\MM$, the cusp is on the right of the corresponding segment of the oriented geodesic $\overline{\gamma}=\pi(\gamma)$ as it crosses $S$ if the segment is labeled $R$ and on the left if the segment is labeled $L$ as in Figure \ref{modboat_figure}.

\begin{figure}
    \begin{subfigure}{.45\textwidth}
        \centering
        \includegraphics[width=.45\textwidth]{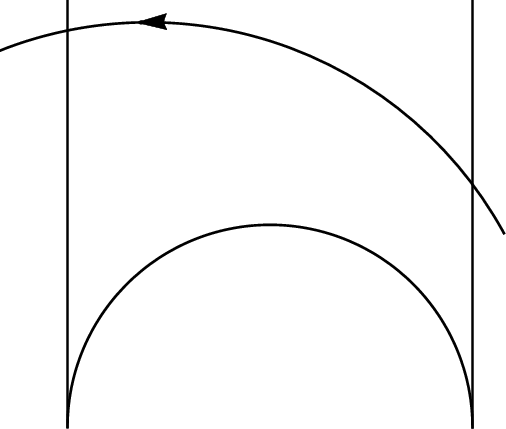}
        \includegraphics[width=.45\textwidth]{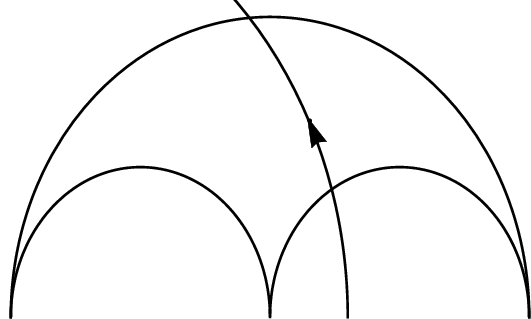}
        \caption{Geodesics cut by two sides that meet on the right of the oriented geodesic}
    \end{subfigure}%
    \hfill
    \begin{subfigure}{.45\textwidth}
        \centering
        \includegraphics[width=.45\textwidth]{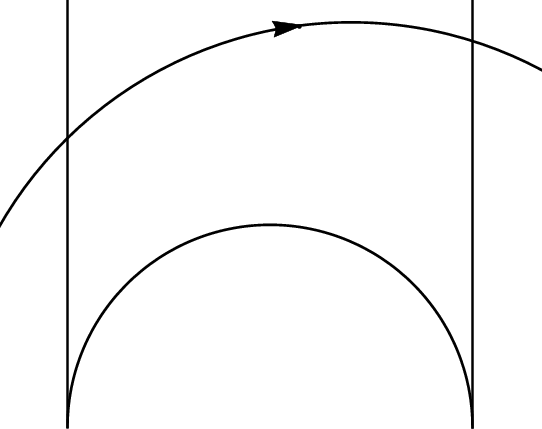}
        \includegraphics[width=.45\textwidth]{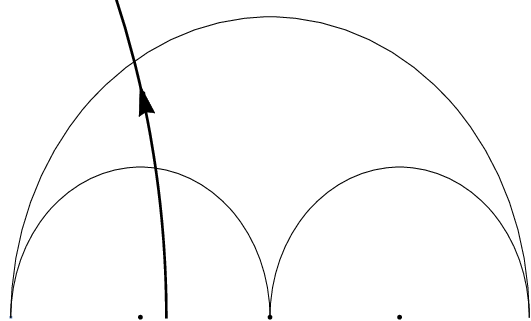}
        \caption{Geodesics cut by two sides that meet on the left of the oriented geodesic}
    \end{subfigure}
    \caption{Examples of geodesic segments cut by edges of the Farey tessellation  $\F$}
    \label{farey_tes}
\end{figure}

\begin{figure}
    \includegraphics[width=.55\textwidth]{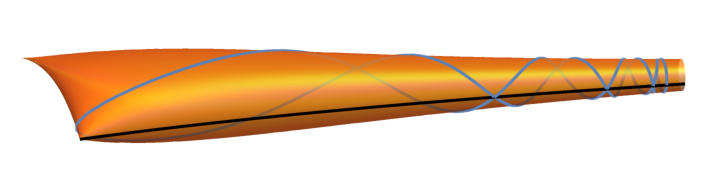}
        \includegraphics[width=.4\textwidth]{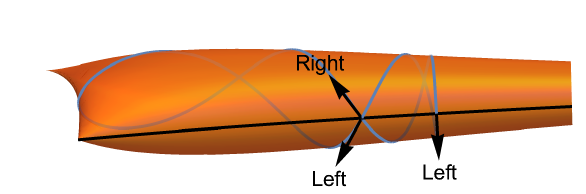}
    \caption{Geodesics on the modular surface $\MM$, shown in blue, with the image of $i\R$ shown in black. The unit tangent vectors in the second image are labeled based on whether the cusp is on the left or right.}
    \label{modboat_figure}
\end{figure}

Since geodesics on the upper half plane are uniquely defined by their endpoints, for every M\"obius transformation $\rho$ leaving $\SS$ invariant, we also use $\rho$ for the map induced on $\AA$. To every geodesic $\gamma \in \AA$ we associate the positively oriented geodesic arc $[\xi_\gamma,\eta_\gamma]$, where $\xi_\gamma$ and $\eta_\gamma$ are defined by
$\xi_\gamma =
 \gamma \cap i\R,
\eta_\gamma =\gamma \cap \lfloor\gplus\rfloor+i\R.$ 
When $\gplus\geq 1$, the positively oriented geodesic $[\xi_\gamma,\eta_\gamma]$ crosses $\lfloor\gplus\rfloor$ ideal triangles with edges that meet at infinity on the left. When $\gplus\leq -1,$ the $\lfloor\gplus\rfloor$  ideal triangles have edges that meet on the right. Thus, the cutting sequences $\x L^{n_0}\y$ and $\x R^{n_0}\y$ indicate that the first digit of the continued fraction expansion of $|\gplus|$ is $n_0$. Modifying the second part of Series' Theorem A to include the map $\rho$ defined in the proof, we get

\begin{thm}\cite[Theorem A]{Ser} 
    Let $\rho:\SS\mapsto\SS$ with 
    $(x,y)\mapsto \left(\frac{1}{\e \lfloor x\rfloor-x},\frac{1}{\e \lfloor x\rfloor-y}\right),$ where $\e=\sgn(x)$. Then $\rho(\y)=\xi_{\rho(\gamma)}$.

    If $(\gplus,\gneg)$ defines a geodesic with cutting sequence $\dots L^{n_{-2}}R^{n_{-1}}\x L^{n_0}\y R^{n_1}L^{n_2}\dots$, then 
    \[\gplus=[n_0;n_1,n_2,\dots],\quad \gneg=-[0;n_{-1},n_{-2},\dots],\]
    and $\rho(\gplus,\gneg)$ defines a geodesic with cutting sequence $\dots L^{n_{-2}}R^{n_{-1}} L^{n_0}\xi_{\rho(\gamma)}R^{n_1}\eta_{\rho(\gamma)}L^{n_2}\dots.$ 

    If $(\gplus,\gneg)$ defines a geodesic with cutting sequence $\dots R^{n_{-2}}L^{n_{-1}}\x R^{n_0}\y L^{n_1}R^{n_2}\dots$, then 
    \[\gplus=-[n_0;n_1,n_2,\dots],\quad \gneg=[0;n_{-1},n_{-2},\dots],\]
    and $\rho(\gplus,\gneg)$ defines a geodesic with cutting sequence $\dots R^{n_{-2}}L^{n_{-1}} R^{n_0}\xi_{\rho(\gamma)}L^{n_1}\eta_{\rho(\gamma)}R^{n_2}\dots.$ 
\end{thm}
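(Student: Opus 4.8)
The plan is to prove the two symmetric cases in parallel, reducing the second to the first via the involution $(x,y)\mapsto(-x,-y)$, which exchanges $R$ and $L$ in the cutting sequence and swaps the roles of $\gplus$ and $\gneg$. So I concentrate on the case $\gplus\geq 1$, cutting sequence $\dots L^{n_{-2}}R^{n_{-1}}\x L^{n_0}\y R^{n_1}L^{n_2}\dots$. First I would establish the continued-fraction identities $\gplus=[n_0;n_1,n_2,\dots]$ and $\gneg=-[0;n_{-1},n_{-2},\dots]$. The forward direction $\gplus=[n_0;n_1,\dots]$ follows from iterating the observation already recorded in the excerpt: the arc $[\x,\y]$ crosses exactly $\lfloor\gplus\rfloor=n_0$ ideal triangles whose edges meet at $\infty$ on the left, so the segment of the cutting sequence immediately after $\x$ is $L^{n_0}$, and $\eta_\gamma=\gamma\cap(n_0+i\R)$. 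Applying the M\"obius map $z\mapsto \tfrac{1}{z-n_0}$ (which is in $\PSL$, maps the vertical line $n_0+i\R$ to $i\R$ and the triangle $\{n_0,n_0+1,\infty\}$ to $\{0,1,\infty\}$ with reversed orientation) sends $\gplus$ to $1/(\gplus-n_0)=1/[0;n_1,n_2,\dots]=[n_1;n_2,\dots]\geq 1$, sends $\gneg$ into $(-1,0)$ hence keeps us in $\SS$, and turns the remaining cutting sequence $R^{n_1}L^{n_2}\dots$ into $L^{n_1}R^{n_2}\dots$ by the $R\leftrightarrow L$ swap of the orientation reversal; induction on the index then gives all the $n_i$ for $i\geq 1$. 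The claim for $\gneg$ is the same computation read backwards: the past of the cutting sequence, $\dots L^{n_{-2}}R^{n_{-1}}$, records the continued fraction expansion of $-1/\gneg$ by the same triangle-counting argument applied to the time-reversed geodesic (endpoints swapped, orientation flipped, $R\leftrightarrow L$), giving $-1/\gneg=[n_{-1};n_{-2},\dots]$, i.e. $\gneg=-[0;n_{-1},n_{-2},\dots]$.

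Next I would verify that $\rho$ as defined indeed induces the geodesic map sending $\gamma$ to $\rho(\gamma)$, and identify $\rho(\gplus,\gneg)$. With $\e=\sgn(\gplus)=+1$ in this case and $\lfloor\gplus\rfloor=n_0$, we have $\rho(x,y)=\bigl(\tfrac{1}{n_0-x},\tfrac{1}{n_0-y}\bigr)$, which is the M\"obius transformation $z\mapsto \tfrac{1}{n_0-z}$ applied to both endpoints — this is exactly $z\mapsto\tfrac{1}{z-n_0}$ postcomposed with $z\mapsto -z$, i.e. the orientation-reversing element of $\PSL$ used above. Hence $\rho(\gplus)=\tfrac{1}{n_0-\gplus}=-[n_1;n_2,\dots]=-\gplus'$ where I name the new endpoints by analogy, and $\rho(\gneg)=\tfrac{1}{n_0-\gneg}=\tfrac{1}{n_0+[0;n_{-1},\dots]}=[0;n_0,n_{-1},n_{-2},\dots]\in(0,1)$, so $\rho(\gamma)\in\SS$ lands in the ``negative'' family, consistent with the $R\leftrightarrow L$ swap. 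The identity $\rho(\y)=\xi_{\rho(\gamma)}$ is then immediate: $\y=\gamma\cap(n_0+i\R)$ and $\rho$ (as the map $z\mapsto\tfrac{1}{n_0-z}$) carries the line $n_0+i\R$ to $i\R$, so $\rho(\y)\in\rho(\gamma)\cap i\R=\xi_{\rho(\gamma)}$.

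Finally, the statement about the cutting sequence of $\rho(\gamma)$: since $\rho$ acts on $\H$ as an element of $\PSL$, it permutes the Farey tessellation $\F$ and therefore carries the cutting sequence of $\gamma$ verbatim to the cutting sequence of $\rho(\gamma)$, except that being orientation-reversing it interchanges $R$ and $L$; so the word $\dots L^{n_{-2}}R^{n_{-1}}\x L^{n_0}\y R^{n_1}L^{n_2}\dots$ becomes $\dots R^{n_{-2}}L^{n_{-1}}\rho(\x)R^{n_0}\rho(\y)L^{n_1}R^{n_2}\dots$. But $\rho(\x)=\rho(\gamma)\cap\rho(i\R)$ is just a crossing point with an edge of $\F$ interior to the pictured arc, and the content of the final assertion is that once we relabel using $\xi_{\rho(\gamma)}=\rho(\y)$ and $\eta_{\rho(\gamma)}$, the claimed form $\dots L^{n_{-2}}R^{n_{-1}}L^{n_0}\xi_{\rho(\gamma)}R^{n_1}\eta_{\rho(\gamma)}L^{n_2}\dots$ holds; here one must check that the orientation-reversal bookkeeping of the two cases is consistent, which is where the real care is needed. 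Indeed, I expect the main obstacle to be precisely this sign/orientation bookkeeping — keeping straight which of the two mirror-image conventions ($R\leftrightarrow L$, $\gplus\leftrightarrow\gneg$, $\e=\pm1$) applies at each step, and confirming that the indices $n_i$ attached to the new word line up with Series' original Theorem A rather than being shifted or reversed. A clean way to contain this is to phrase everything through the single auxiliary map $z\mapsto\tfrac{1}{n_0-z}$ and track a distinguished marked edge through one application, then invoke induction; the bi-infinite nature of the sequence is handled by noting the past is determined by the time-reversed geodesic, to which the same lemma applies.
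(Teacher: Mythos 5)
The paper gives no proof of this theorem (it is quoted from \cite{Ser}), so I can only assess your argument on its own terms. The overall strategy---realize $\rho$ as an explicit M\"obius map, use invariance of the Farey tessellation $\F$, count triangles to read off $n_0=\lfloor|\gplus|\rfloor$, and induct---is the right one, and your computations $\rho(\gplus)=\tfrac{1}{n_0-\gplus}=-[n_1;n_2,\dots]$, $\rho(\gneg)=[0;n_0,n_{-1},\dots]$, and $\rho(\y)=\xi_{\rho(\gamma)}$ (since $z\mapsto\tfrac{1}{n_0-z}$ carries $n_0+i\R$ to $i\R$) are correct. But there is a genuine error at exactly the step you flag as delicate: the orientation bookkeeping is backwards. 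The map underlying $\rho$ in the case $\gplus>1$ is $z\mapsto\tfrac{1}{n_0-z}$, whose matrix has determinant $+1$; it lies in $\PSL$, is orientation-preserving, and therefore preserves the $L/R$ label of every crossing (``the vertex is on the left/right of the oriented geodesic'' is preserved by orientation-preserving isometries). Hence the cutting sequence of $\rho(\gamma)$ is letter-for-letter identical to that of $\gamma$, and the content of the final assertion is only that the marked points advance by one block. Your last paragraph instead declares $\rho$ orientation-reversing, swaps every $R$ and $L$ to get $\dots R^{n_{-2}}L^{n_{-1}}\rho(\x)R^{n_0}\rho(\y)L^{n_1}\dots$, which flatly contradicts the theorem's stated conclusion $\dots L^{n_{-2}}R^{n_{-1}}L^{n_0}\xi_{\rho(\gamma)}R^{n_1}\dots$, and then appeals to an unspecified ``relabeling'' to reconcile them; no relabeling of marked points can change the letters.

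The mirror image of the same confusion appears in your first paragraph: $z\mapsto\tfrac{1}{z-n_0}$ has determinant $-1$, so it is \emph{not} in $\PSL$ and does not preserve $\H$; to use it you must take $z\mapsto 1/(\bar z-n_0)$, an orientation-reversing isometry in $\mathrm{PGL}(2,\Z)$, which does preserve $\F$ and does swap $L\leftrightarrow R$. So the conclusion you draw there for the induction (new forward endpoint $[n_1;n_2,\dots]>1$, initial block $L^{n_1}$) is right, but you cannot simultaneously claim the map is in $\PSL$ and that it swaps labels ``by orientation reversal.'' Once you consistently assign determinant $+1$ (label-preserving) to $\rho$ and determinant $-1$ (label-swapping) to the auxiliary reflection used in the induction, the argument closes up and the final step becomes immediate. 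A minor further quibble: the symmetry $(x,y)\mapsto(-x,-y)$ negates both endpoints and exchanges the two components of $\SS$; it does not ``swap the roles of $\gplus$ and $\gneg$.''
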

Similar to the Gauss map, if $\gplus=\e[n_0;n_1,n_2,\dots],\gneg=-[0;n_{-1},n_{-2},\dots], \e=\pm1$, then 
\[\rho(\e[n_0;n_1,n_2,\dots],-\e[0;n_{-1},n_{-2},\dots])=(-\e[n_1;n_2,\dots],\e[0;,_0,n_{-1},n_{-2},\dots]).\]

We also define $u_\gamma$ to be the unit tangent vector based at $\x$ pointing along $\gamma$, and $X$ the set of unit tangent vectors in $T^1\MM$ based at $x\in S$ which points along a geodesic which changes type at $x$. Since every geodesic $\overline{\gamma}$ on $\MM$, other than $S$ has a lift in $\AA$, $X=\{\pi(\x,u_\gamma):\textnormal{cutting sequence changes type and $\pi(\x)$}\}$. Series proved that the map $i:\AA\to X$ where $i(\gamma)=\pi(\x,u_\gamma)$ is continuous, open, and bijective \cite[Theorem A]{Ser}
\footnote{Series' map is injective except for the oppositely oriented geodesics between $+1$ and $-1$ which project to the same line, but we instead removed that measure zero set from the definition of $\SS$.}. 
Since geodesics on the upper half plane are also uniquely defined by a unit tangent vector and a base point, $\rho$ induces the first return map $P$ on $X$.

Finally, we summarize the rest of the maps in Series' Section 2 in the following diagram 

\begin{center}
    \begin{tikzcd}[ampersand replacement=\&,column sep=2em,row sep=3em]
        X \arrow[d, "P"] 
        \&                         
            \& \AA \arrow[ll, "i"'] \arrow[d, "\rho"] 
                \& \SS \arrow[l, Rightarrow, no head, swap, "\sim"]  
                \arrow[rr, "{\left(\frac{1}{|x|},|y|\right)}"]
                \arrow[d,"\rho"] 
                \&  \&{[0,1)^2} \arrow[d,"\overline{G}"] \\
        X   \&  \& \AA \arrow[ll, "i"]                 
            \& \SS \arrow[l, Rightarrow, no head, swap, "\sim"] 
                \arrow[rr,"{\left(\frac{1}{|x|},|y|\right)}"]
                \&  \& {[0,1)^2}.        
        \end{tikzcd}
\end{center}
Since $\frac{d\alpha d\beta d\theta}{(\alpha-\beta)^2}$ is the invariant measure for the first return map to a cross-section of the geodesic flow on $T^1\MM$, $\frac{d\alpha d\beta}{(\alpha-\beta)^2}$ is the invariant measure for $\rho$. Let $\e=\sgn(x)$. Using the projections $J:\SS\to[0,1)^2\times\{\pm1\}$ by 
$J(x,y)=\left(\frac{\e}{x},-\e y,\e\right)$ and $(x,y,\e)\mapsto(x,y)$, gives 
\[\frac{d\tfrac{\e}{x}d(-\e y)}{(\tfrac{\e}{x}-(-\e y))^2}
=\frac{dxdy}{(1+xy)^2}\]
    is the invariant measure for $\overline{G}$.

\subsection{The Farey map}\label{farey}
The Farey map $F:[0,1)\to[0,1)$ by \[
    F(x)=\begin{cases}
        \frac{x}{1-x}, & x\in\left[0,\frac{1}{2}\right),\\
        \frac{1-x}{x}, & x\in\left[\frac{1}{2},1\right),
    \end{cases}\]
is a \emph{slowdown} of the Gauss map for the regular continued fractions, since for all $x\in[0,1)$, $F^k(x)=G(x)$ where $k=\lfloor\frac{1}{x}\rfloor$. 
On the continued fraction expansion of $x,$ 
\[F[0;n_1,n_2,n_3,\dots]=\begin{cases}
    [0;n_1-1,n_2,n_3\dots], & n_1>1,\\
    [0;n_2,n_3,\dots], & n_1=1.
\end{cases}\]
The natural extension of the Farey map is 
$\overline{F}:[0,1)^2\to[0,1)^2$ where 
\[\overline{F}(x,y)=\begin{cases}
    \left(\frac{x}{1-x},\frac{y}{1+y}\right), & x\in\left[0,\frac{1}{2}\right),\\
    \left(\frac{1-x}{x},\frac{1}{1+y}\right) & x\in\left[\frac{1}{2},1\right),
\end{cases}\]
and 
\[\overline{F}([0;n_1,n_2,n_3\dots],[0;n_0,n_{-1},\dots])=\begin{cases}
    ([0;n_1-1,n_2,n_3\dots],[0;n_0+1,n_{-1},\dots]), & n_1>1,\\
    ([0;n_2,n_3,\dots],[0;n_1,n_0,n_{-1},\dots]), & n_1=1.
\end{cases}
\]

Heersink modified Series' construction to realize the natural extension of the Farey as the projection of a M\"obius transformation acting on the endpoints of a geodesic on $\H$ \cite{Hee}. 
First, we modify $\SS$ to $\TT=\pm\bigl((1,\infty)\times(-\infty,0)\bigr)$. Then $\BB$ is the set of geodesics on $\H$ with endpoints in $\TT$, and $Y$ is the set of unit tangent vectors based on $S$. 

While not stated in \cite{Hee}, we can also modify $\rho$ to act on the cutting sequences and endpoints of $(\gplus,\gneg)$. Let $\sigma:\TT\to\TT$ by 
\[\sigma(x,y)=\begin{cases}
    (x-\e,y-\e), & \e  x\in (1,2],\\
    \left(\frac{1}{\e -x},\frac{1}{\e -y}\right), & \e x\in (2,\infty).
\end{cases}\]
In $\H$, $\x=\gamma\cap i\R$ as before, but now \[\y=\begin{cases}
    \gamma\cap \pm1+ i\R, & |\gplus|\in(2,\infty),\\
    \gamma\cap \frac{3}{2}+\frac{1}{2}e^{it}, & \gplus\in(1,2],\\
    \gamma\cap \frac{-3}{2}+\frac{1}{2}e^{it}, & \gplus\in [-2,-1).
\end{cases}\] Then $\sigma$ induces the first return map on $Y$ given in \cite{Hee}. Projecting by 
$(x,y)\mapsto\left(\frac{\e}{x},\frac{1}{1-\e y},\e\right)$ and $(x,y,\e)\mapsto(x,y)$ gives 
\[\frac{d\tfrac{\e}{x}d(\frac{-\e}{y}+\e)}{(\tfrac{\e}{x}-(\frac{-\e}{y}+\e))^2}
=\frac{dxdy}{(x+y-xy)^2}\]
is the invariant measure for $\overline{F}$.

Returning to the regular continued fraction expansion of $(\gplus,\gneg)$,  
\begin{align*}
    \gplus&=\e\left(n_0+\frac{1}{n_1+\dots}\right),\quad 
    \gneg=-\e\left(n_{-1}+\frac{1}{n_{-2}+\dots}\right),\\
    \sigma(\gplus,\gneg)&=\begin{cases}
        \left(\e\left(n_0-1+\frac{1}{n_1+\dots}\right),
        -\e\left(n_{-1}+1+\frac{1}{n_{-2}+\dots}\right)\right), & n_0>1,\\
        \left(-\e\left(n_1+\frac{1}{n_2+\dots}\right),
        \e\left(\frac{1}{n_{-1}+1+\frac{1}{n_{-2}+\dots}}\right)\right), & n_0=1.
    \end{cases}
\end{align*}
When $\gneg\in\e(-1,0),$ $n_{-1}=0,$ so $\sigma$ has the same effect as the Farey map on the regular continued fraction expansion of $(\gplus,\gneg)$.

\section{Even continued fractions and cutting sequences}\label{ecf_gauss}
%%%%%%%%%%%%%%%%%%%%%%%%%%%%%
\subsection{Even continued fractions}\label{intro_ecf}
%%%%%%%%%%%%%%%%%%%%%%%%

Schweiger \cite{Sch1} defined the even continued fractions of $x\in[0,1]$ as
\begin{equation*}
x= \cfrac{1}{a_1+\cfrac{\e_1}{a_2+\cfrac{\e_2}{a_3+\dots}}}=[\![(a_1,\e_1) (a_2, \e_2) (a_3, \e_3) \ldots]\!],
\end{equation*}
where $\e_i =\e_i(x)\in \{\pm1\}$ and every $a_i$ is an even positive integer. %Now, in order for every rational number to have a unique, finite, even continued fraction expansion, we require that the final $\e_i=+1$ and allow the final digit to be $1$. The lack of unique representations comes from $1=[\![\overline{(2,-1)}]\!]=2-[\![\overline{(2,-1)}]\!].$ 

\begin{figure}
\centering
\includegraphics[width=.5\textwidth]{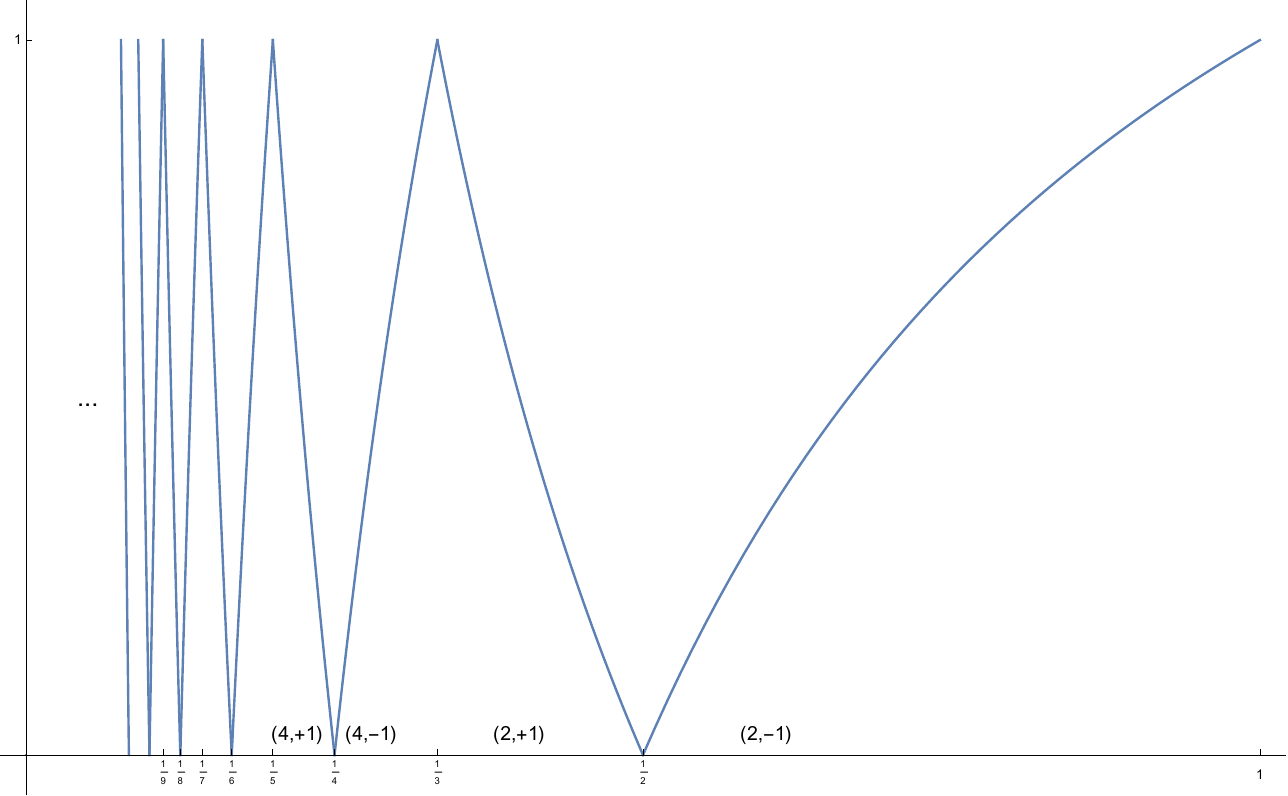}
\caption{Even Gauss Map}
\label{intro_egauss}
\end{figure}

The even Gauss map $T_e:[0,1]\to[0,1]$ (Figure \ref{intro_egauss}) is given by
\begin{equation*}
 T_e (x)= \begin{cases}
        \left|\frac{1}{x}-\left\lfloor\frac{1}{2x}+\frac{1}{2} \right\rfloor\right| 
            &\textnormal{ if } x\neq 0,\\
        0   &\textnormal{ if } x= 0
    \end{cases}=
    \begin{cases} 
        \frac{1}{x}-2k  &\textnormal{ if } x\in \left(\frac{1}{2k+1},\frac{1}{2k}\right],\\
        -\frac{1}{x}+2k  &\textnormal{ if } x\in \left(\frac{1}{2k},\frac{1}{2k-1}\right],\\
        0&\textnormal{ if } x=0.
\end{cases}\ k\geq 1.
\end{equation*}
Then $a_1(x)=2k$ and $\e_1(x)=\sgn\left(\tfrac{1}{x}-2k\right)$.
If we plug in the even continued fraction expansion to $T_e$, we again delete the first digit $(a_1,\e_1)$ of $x$, i.e.
\begin{equation*}
T_e \left( [\![(a_1,\e_1) (a_2,\e_2) (a_3,\e_3)\ldots]\!] \right) =[\![ (a_2,\e_2) (a_3,\e_3) (a_4,\e_4)\ldots ]\!] .
\end{equation*}

As with the regular continued fractions, we expand the definition of even continued fractions to all real numbers. For $x>1$, the first digit $(a_0,\e_0)=(2k,+1)$ for $x\in[2k,2k+1)$, and $(a_0,\e_0)=(2k,-1)$ for $x\in[2k-1,2k)$. We then write 
\[x=a_0+\cfrac{\e_0}{a_1+\cfrac{\e_1}{a_2+\dots}}=[\![(a_0,\e_0);(a_1,\e_1)\dots]\!].
\]
%For $x<0$, we find the even continued fraction expansion of $|x|$, then multiply the entire thing by $-1$ to get $x=-[\![(a_0,\e_0);(a_1,\e_1)\dots]\!].$

The dual continued fraction expansion to the even continued fractions are the extended even continued fractions. While we will not use this expansion for the slowdown of the Gauss map on $[0,1)$, we will use it for the backwards endpoints of geodesics on the upper half plane.
For $y\in[-1,1]$, the extended even continued fraction expansion is 
\begin{equation*}\begin{split}
y:=
\cfrac{\e_0}{b_0+\cfrac{\e_1}{b_1+\cfrac{\e_2}{b_2+\dots}}}&
=\llangle(\e_0/b_0)(\e_1/b_1)(\e_2/b_2)\dots\rrangle%\\&= \e_0 [\![(b_0,\e_1) (b_1,\e_2) (b_2,\e_3)\ldots ]\!] =\polter{\e_0}{b_0}+\polter{\e_1}{b_1}+\polter{\e_2}{b_2}.
\end{split}
\end{equation*}
We see that the even continued fractions are self-dual, as the extended even continued fractions are essentially a reindexing of the even continued fractions. That is, we relabel each $\e_i$ as $\e_{i+1}$. However, the map that accomplishes this reindexing is metrically complicated. 

%The extended even Gauss map $T^*_e:[-1,1]\to[-1,1]$ is:
%\begin{align}
%T^*_e(x)=
%\begin{cases}
% \frac{1}{|x|}-2k&\textnormal{if }|x|\in(\frac{1}{2k},\frac{1}{2k-2}],\\
% 0& \textnormal{if } x=0.
%\end{cases}
%\end{align}
%Thus, we find that the even continued fractions are the ``folded'' version of the extended even continued fractions, and $|T^*_e(x)|=T^*_e(|x|)$.

Now we define $\overline{T}_e$ on $[0,1]\times[-1,1]$ to be 
\begin{equation}\label{tebar}
\overline{T}_e (x,y)=
\begin{cases}
 \big(  \e_1(x)( \frac{1}{x}-a_1(x)),\frac{\e_1(x)}{a_1(x)+y}\big)=(T_e(x),\frac{\e_1(x)}{a_1(x)+y})&\textnormal{if } x\neq0,\\
 (0,y)&\textnormal{if } x=0.
\end{cases}
\end{equation}
As with the regular continued fractions, we get the two-sided shift map
\begin{equation*}
\begin{split}
\overline T_e &  \big([\![(a_1,\e_1) (a_2,\e_2)\ldots ]\!] ,\llangle(\e_0/b_0)(\e_1/b_1)(\e_2/b_2)\dots\rrangle \big)
 \\&= \big([\![(a_2,\e_2) (a_3,\e_3) \ldots ]\!] ,\llangle(\e_1/a_1)(\e_0/b_0)\dots\rrangle \big).
\end{split}
\end{equation*}

%%%%%%%%%%%%%%%%%%%%%%%%%%%
\subsection{A new coding of geodesics on some modular surfaces and %%%%%%%%%%%%%%%%%%%%%%%%%%%%%
the action on the upper half plane}\label{evenoddcut}
Bauer and Lopes \cite{BL}  realized $\overline T_e$
 as a section of the billiard flow on 
\begin{equation*}
\Theta :=\left\{ M\in \Gamma (1) : M \equiv I_2 \ \mbox{\rm or}\ \left( \begin{matrix}
0 & 1 \\ 1 & 0 \end{matrix}\right) \pmod{2}  \right\}.
\end{equation*} 
Along with Boca, the author modified Bauer and Lopes' construction to describe $\overline T_e$ as a cross-section of  the geodesic flow on the modular surface  $\MM_e:=\Theta\backslash\H$ using a Series-style coding in \cite{BM1}.
Short and Walker \cite{SW16} also describe the even continued fraction expansions of rational numbers using paths along the \emph{Farey tree}. The Farey tree is defined as the geodesics in the Farey tessellation connecting two elements of the orbit $\Theta\infty=\{\frac{m}{n}\in\Q: \textnormal{ $m$ or $n$ is even}\}$. We will modify the construction in \cite{BM1} to more closely describe the relationship between geodesics on $\MM_e$ and $\H$ by removing the Farey tree. This corresponds to tessellating $\H$ by the standard Dirichlet region $\{z\in\H:|\operatorname{Re}|<1,|z|\geq 1\}$.

As in the regular continued fraction cases, we identify a geodesic in $\H$ by its endpoints $(\gplus,\gneg)$. Let $\AA_e$ is the set of geodesics in $\H$ with endpoints
\begin{equation*}
(\gplus,\gneg) \in \SS_e := \big( (-\infty ,-1) \cup (1,\infty)\big) \times (-1,1) .
\end{equation*}
Thus, for every M\"obius transformation $\rho$ leaving $\SS_e$ invariant, we also use $\rho$ for the map induced on $\AA_e$. To every geodesic $\gamma \in \AA_e$ we associate the positively oriented geodesic arc $[\xi_\gamma,\eta_\gamma]$, where $\xi_\gamma$ and $\eta_\gamma$ are defined by
\begin{equation*}
\xi_\gamma =\begin{cases} \gamma \cap 1+i\R & \mbox{\rm if $\gamma_\infty >1$}, \\
\gamma \cap -1+i\R & \mbox{\rm if $\gamma_\infty < -1$,} \end{cases}
\qquad
\eta_\gamma =\begin{cases} \gamma \cap a_1+\frac{\e_1}{2}+\frac{1}{2}e^{it} & \mbox{\rm if $\gamma_\infty >1$}, \\
\gamma \cap -a_1-\frac{\e_1}{2}+\frac{1}{2}e^{it} & \mbox{\rm if $\gamma_\infty < -1$.} \end{cases}
\end{equation*}
Every geodesic $\overline{\gamma}$ on $\MM_e$ lifts to $\H$ to a geodesic $\gamma \in \AA_e$ \cite{BM1}.%\footnote{For proof, see Lemma 4.5.2 of \cite{thesis}.}

\begin{figure}
    \begin{subfigure}{.45\textwidth}
        \includegraphics[width=\textwidth]{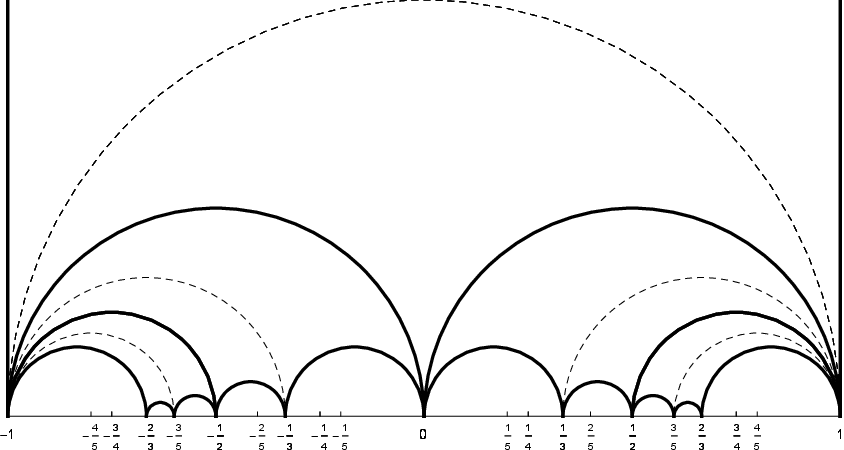}
    \end{subfigure}%
    \hfill\raisebox{5em}{\LARGE{$\dots$}}\hfill
    \begin{subfigure}{.45\textwidth}
        \includegraphics[width=\textwidth]{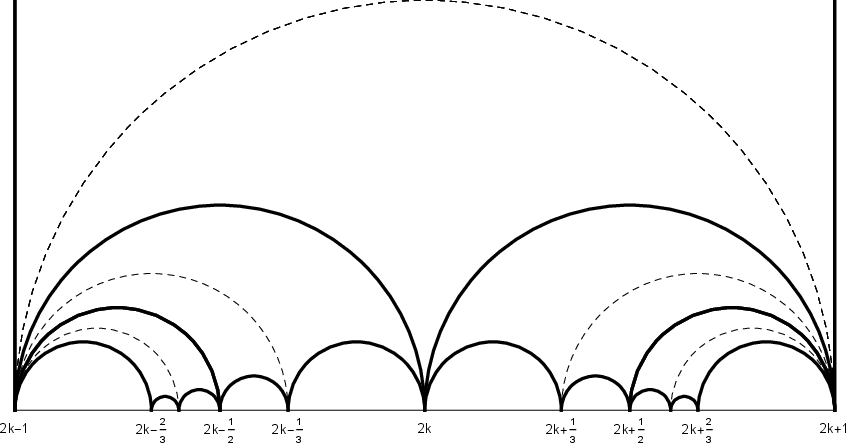}
    \end{subfigure}
    \caption{The tessellation of $\H$ by the ideal triangle $\{-1,1,\infty\}$, where the solid lines represent images of $-1+i\R$ (Type 1 edges) dotted lines represent the images of $e^{it}$ (Type 2 edges) under $\Theta$ acting by M\"obius transformations.}
    \label{new_tes}
\end{figure}
Let  $X_e$ be the set of elements $(\pi_e(\x),u_\gamma)\in T^1\MM_e$ with base point $\pi_e(\x)$ on the line $\pi_e(\pm1+i\R)$ and 
unit tangent vector $u_\gamma$ pointing along $\pi_e(\gamma)$ such that $\pi_e(\y)$ gives the base point of the first return of $\pi_e(\gamma)$ to $X_e$. Using the same coding as in \cite{Ser}, the base point $\x$ breaks the cutting sequence of $\pi_e(\gamma)$ into strings $L^{2k-2}R, L^{2k-1}R, R^{2k-2}L, R^{2k-1}L$ that give the digits of the even continued fraction expansion of $\gplus$. However, $\MM_e$ has two cusps, with $\pi_e(\pm1+i\R)$ the geodesic between the two cusps, so the relationship between the cutting sequence of geodesics on $\H$ and the cutting sequence for geodesics on $\MM_e$ is less clear.

We modify the Farey tessellation to clarify the relationship between oriented geodesics and the $\Theta\infty$ cusp. First, we remove the edges of the Farey tessellation connecting two elements of $\Theta\infty=\{\tfrac{m}{n}\in\Q:m\not\equiv n \pmod 2\}.$ For example, we remove all $2k+i\R$ and $2k+\tfrac{1}{4}+\tfrac{1}{4}e^{it}$. The remaining edges of the Farey tessellation are in bold in Figure \ref{new_tes} and connect an element of the $\Theta\infty$ orbit to an element of the orbit $\Theta 1=\{\tfrac{m}{n}\in\Q:m,n\in2\Z+1\}$, which we will call Type 1 edges. Note that these edges project to the geodesic between the two cusps on $\MM_e$. For geodesics $\gamma$ on $\H$. The dotted line edges of the tessellation connect two elements of the orbit $\Theta 1$, and we will call these Type 2 edges. These edges project to the singular line which runs from the $\pi_e(1)$ cusp to $\pi_e(i)$ and back, which we will call Type 2 edges.  We will label segments of geodesics cut by two successive Type 1 edges. We will use the example of $\gplus>0$ to help illustrate the definitions. The solid blue line in Figure \ref{ecf_gauss_ex} is an example of $\gplus>1$, and the dotted blue line is an example of $\gplus<-1$.

\begin{figure}
 \includegraphics[width=\textwidth]{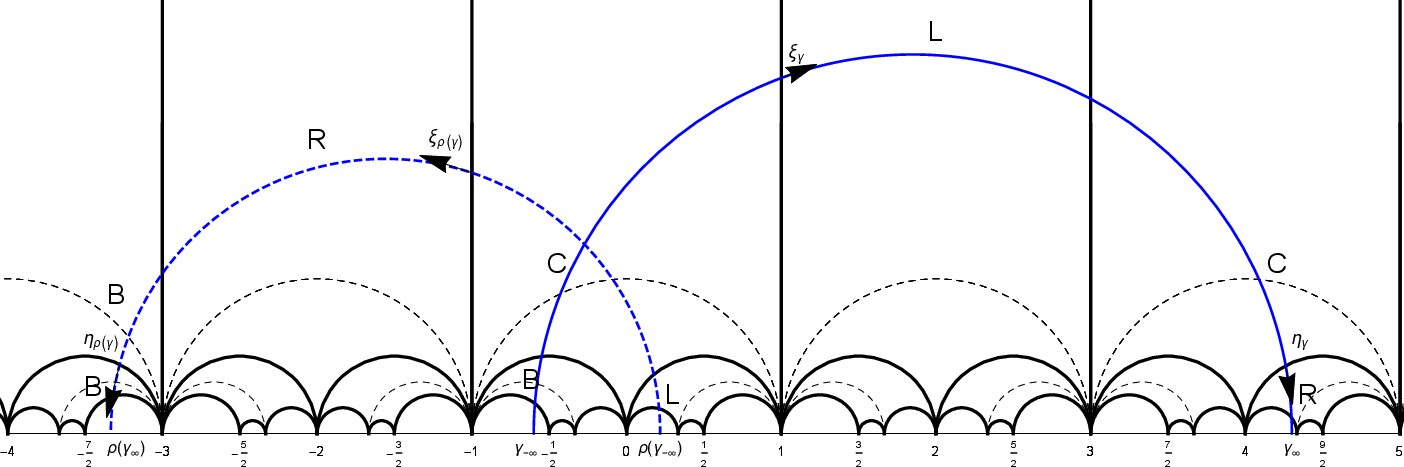}
 \caption{Geodesics $\gamma$ (solid) and $\rho(\gamma)$ (dashed) with segments between two Type 1 edges labeled. Segments are labeled $\mathbf{L}$ or $\mathbf{R}$ if the segment does not cross a Type 2 edge. Segments that cross a Type 2 edge are labeled $\mathbf{B}$ if the Type 1 edges meet at a vertex and $\mathbf{C}$ if they do not.}
 \label{ecf_gauss_ex}
\end{figure}

 We know that the first digit of $\gplus>1$ is $(2k, +1)$ if $\gplus\in[2k-1,2k)$ and is $(2k, -1)$ if $\gplus\in[2k,2k+1)$. Thus, $\gamma$ is cut by $k-1$ Type 1 edges after $1+i\R$ before hitting the Type 2 edge $2k+e^{it}$. As with in the original coding, we note that the Type 1 edges meet at a vertex on the left of the geodesic and label the edge $\mathbf{L}$. We will use a bold letter to differentiate it from the original coding.
Thus, there are $k-1$ segments labeled $\mathbf{L}$ between $\x$ and $\y$, corresponding to the fact that $\gplus=2k\pm w$ for some $w\in[0,1]$. Note that the original cutting sequence for this segment is $L^{2k-2}$, and on $\MM_e$, the $\pi_e(\infty)$ cusp is on the left of $\bar\gamma$. For segments that are cut by two successive Type 1 edges with a vertex on the right and do not cross a Type 2 edge, we label the segment $\mathbf{R}$, corresponding to the cutting sequence $R^2$ in the Farey tessellation coding.

 Now, if $\gplus\in[2k-1,2k)$, $\gamma$ is cut by $2k-1+i\R, 2k+e^{it},$ and $2k-\frac{1}{2}+\frac{1}{2}e^{it},$ which all meet at the vertex $2k-1$. We label this segment $\mathbf{B}$, and do the same for all segments that cross a Type 2 edge between two successive Type 1 edges that meet at a vertex. On $\MM_e$, this corresponds to $\bar\gamma$ crossing $\pi_e(e^{it})$ without going around the cone. %See Figure \ref{mod_ex1}. 
 Note that $\mathbf{L^{k-1}B}$ corresponds to the Farey tessellation cutting sequence $L^{2k-2}R$, where $R$ means that the $\pi_e(1)$ cusp is on the right of the geodesic. However, here we are keeping track of the $\pi_e(\infty)$ cusp, which remains on the left of the geodesic.

 Finally, if $\gplus\in[2k,2k+1)$, $\gamma$ is cut by $2k-1+i\R, 2k+e^{it},$ and $2k+\frac{1}{2}+\frac{1}{2}e^{it}.$ We will label all segments that cross a Type 2 edge between two successive Type 1 edges that do \emph{not} meet at a vertex $\mathbf{C}$. 
 When $\gplus>1$,  $\mathbf{L^{k-1}C}$ corresponds to the Farey tessellation cutting sequence $L^{2k-2}LR$. On $\MM_e$, the $\pi_e(\infty)$ cusp is on the left of the corresponding segment of $\bar\gamma$, which then crosses the line that runs between $\pi_e(i)$ and $\pi_e(\infty)$, wraps around the $\pi_e(i)$ cone point, and then the $\pi_e(\infty)$ cusp is on the right of the corresponding segment of $\bar\gamma$. %See Figure \ref{mod_ex2}. 
 That is, the $\pi_e(\infty)$ is on the left of $\bar\gamma$ at the start of the corresponding segment, and on the right at the end.

\begin{figure}
 \includegraphics[width=.6\textwidth]{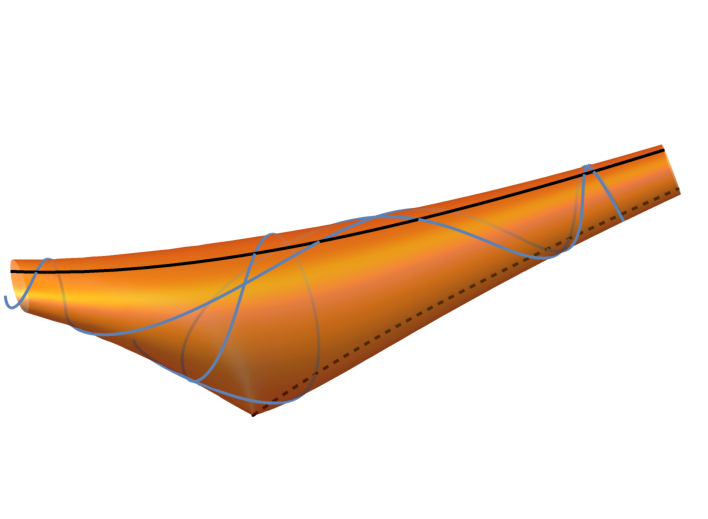}
  \caption{Geodesic on the modular surface $\MM_e$, shown in blue, with the image of $1+i\R$ shown in black  and the image of the $e^{it}$ as a dashed line. These two lines meet at $\pi_e(\infty)$, and the modified cutting sequence keeps track of whether the cusp $\pi_e(\infty)$ is on the left or right.}
\end{figure}

Define $\rho_e:\SS_e\to\SS_e$ by 
$(x,y)\mapsto \left(\frac{1}{ 2k\e-x},\frac{1}{2k\e-y}\right)$ for 
$\e=\operatorname{sign}(x), |x|\in[2k-1,2k+1)$.
As in Section \ref{series}, we connect the action of $\rho_{e}$ with the cutting sequence associated to $\gamma$.

\begin{prop}[Sections 6.2 and 6.3 of \cite{BM1}]
 The $k^{th}$ digit of the even continued fraction of $\gplus$ is given by the cutting sequence of $\gamma$ between $\xi_{\rho_e^{k-1}(\gamma)}$ and $\eta_{\rho_e^{k-1}(\gamma)}$, using the following rules: 
 \begin{center}
 \begin{tabular}{rccc}
        &Original cutting sequence & New cutting sequence & Digit \\\hline
    $\gplus>0$: & $L^{2k-2}R$ & $\mathbf{L^{k-1}B}$ & $(2k,-1)$\\
    & $L^{2k-1}R$ & $\mathbf{L^{k-1}C}$ & $(2k,+1)$\\ \hline
    $\gplus<0$: & $R^{2k-2}L$ & $\mathbf{R^{k-1}B}$ & $(2k,-1)$\\
    & $R^{2k-1}L$ & $\mathbf{R^{k-1}C}$ & $(2k,+1)$\\ \hline
 \end{tabular}
\end{center}
 
 Similarly, the $k^{th}$ digit of the extended even continued fraction expansion of $\gneg$ is given by the cutting sequence of $\gamma$ between  $\xi_{\rho_e^{-k}(\gamma)}$ and $\eta_{\rho_e^{-k}(\gamma)}$.  Here, $\mathbf{L^{k-1}B}$ and $\mathbf{R^{k-1}B}$ (or $L^{2k-2} R $ and $ R^{2k-2} L$ in the Farey tessellation cutting sequence) give the digit $(-1/2k)$ and $\mathbf{L^{k-1}C}$ and $\mathbf{R^{k-1}C}$
 ($L^{2k-1} R $ and $R^{2k-1} L$) give the digit $(+1/2k)$ for $k\geq 1$. 
\end{prop}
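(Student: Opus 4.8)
The plan is to establish the $k=1$ case and then propagate it under $\rho_e$. Two preliminary facts are needed. First, on each interval $|x|\in[2k-1,2k+1)$ the map $\rho_e$ is the M\"obius transformation attached to $\left(\begin{smallmatrix}0&1\\-1&2k\e\end{smallmatrix}\right)$, which is $\equiv\left(\begin{smallmatrix}0&1\\1&0\end{smallmatrix}\right)\pmod 2$ and so lies in $\Theta$; hence $\rho_e$ preserves the tessellation of Figure \ref{new_tes} piecewise and therefore carries the cutting sequence of $\gamma$ to that of $\rho_e(\gamma)$. Second, $\rho_e$ shifts the even continued fraction: if $\gplus=\e(2k+\tfrac{\e_0}{a_1+\cdots})$ with $\e=\sgn(\gplus)$ and $|\gplus|\in[2k-1,2k+1)$, then a direct substitution gives $\rho_e(\gplus)=\tfrac{1}{2k\e-\gplus}=-\e\e_0\bigl(a_1+\tfrac{\e_1}{a_2+\cdots}\bigr)$, so $\rho_e$ deletes the first digit $(2k,\e_0)$ and records $\e_0$ in the sign of $\rho_e(\gamma)_\infty$; the same computation on the second coordinate shows that, under the projection $(x,y)\mapsto(\tfrac{\e}{x},\e y,\e)\mapsto(\tfrac{\e}{x},\e y)$, $\rho_e$ descends to the natural extension $\overline T_e$ of \eqref{tebar} (this is the content of Sections 6.2--6.3 of \cite{BM1}), and that $\rho_e$ preserves $\SS_e$.

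For the $k=1$ case I would trace $\gamma$ through the new tessellation from $\x$ to $\y$; take $\gplus>1$, the case $\gplus<-1$ being the mirror image with $\mathbf R$ in place of $\mathbf L$. The point $\x$ lies on the Type 1 edge $1+i\R$, and $\gamma$ then passes through the ideal triangles $\{1,3,\infty\},\{3,5,\infty\},\dots,\{2k-1,2k+1,\infty\}$ (the images of $\{-1,1,\infty\}$ under $z\mapsto z+2j$), crossing the vertical Type 1 edges $3+i\R,5+i\R,\dots,(2k-1)+i\R$; each of the resulting $k-1$ segments lies between two vertical Type 1 edges meeting at $\infty$, which is on the left of the oriented geodesic, so each is labeled $\mathbf L$. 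After $(2k-1)+i\R$, $\gamma$ enters $\{2k-1,2k+1,\infty\}$, crosses the Type 2 edge $2k+e^{it}$, and exits through the Type 1 edge $2k+\tfrac{\e_1}{2}+\tfrac12 e^{it}$ carrying $\y$; this last segment is cut by the two successive Type 1 edges $(2k-1)+i\R$ and $2k+\tfrac{\e_1}{2}+\tfrac12 e^{it}$ and crosses $2k+e^{it}$, so it is labeled $\mathbf B$ or $\mathbf C$ according to whether those two Type 1 edges share the ideal vertex $2k-1$: they do exactly when $\gplus\in[2k-1,2k)$, i.e. $\e_1=-1$ and the digit is $(2k,-1)$, giving $\mathbf L^{k-1}\mathbf B$, and they do not when $\gplus\in[2k,2k+1)$, i.e. $\e_1=+1$ and the digit is $(2k,+1)$, giving $\mathbf L^{k-1}\mathbf C$. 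Comparing with the Farey tessellation string $L^{2k-2}R$ (resp. $L^{2k-1}R$) is then just regrouping $2k-2$ consecutive $L$'s into $k-1$ blocks $\mathbf L=L^2$ and recording the trailing $R$ (resp. $LR$) as $\mathbf B$ (resp. $\mathbf C$), which recovers the correspondence proved in \cite{BM1} for the old coding.

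To pass to general $k$, I would check $\rho_e(\y)=\xi_{\rho_e(\gamma)}$ by the same explicit M\"obius computation that proves $\rho(\y)=\xi_{\rho(\gamma)}$ in Series' Theorem~A; then the block of the cutting sequence of $\gamma$ reading off the $k$-th digit of $\gplus$ is, after applying the tessellation-preserving map $\rho_e^{k-1}$, the first block of $\rho_e^{k-1}(\gamma)$, whose forward endpoint has even continued fraction $T_e^{k-1}$ of that of $\gplus$, so its first digit is the $k$-th digit of $\gplus$ and the $k=1$ analysis applies verbatim. For the extended even continued fraction of $\gneg$ one runs the same argument with $\rho_e^{-1}$: since $\overline T_e$ acts on the second coordinate by prepending a digit, $\rho_e^{-k}$ reads off the $k$-th digit of the extended even continued fraction of $\gneg$, and the $\mathbf B$/$\mathbf C$ dichotomy (equivalently $L^{2k-2}R$/$L^{2k-1}R$) becomes the digits $(-1/2k)$/$(+1/2k)$ through the relabeling $\e_i\mapsto\e_{i+1}$ identifying even with extended even continued fractions from Section \ref{intro_ecf}.

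I expect the main obstacle to be the geometric bookkeeping in the second paragraph: one must verify that between $\x$ and $\y$ the geodesic crosses exactly the listed Type 1 and Type 2 edges and no others — which needs a clear enough local picture of the new tessellation to rule out intervening tiles — and then track the $\mathbf B$ versus $\mathbf C$ determination carefully through all four sign cases. The backward half is less computational but genuinely delicate, since the relabeling identifying the even with the extended even continued fractions is, as noted in Section \ref{intro_ecf}, metrically complicated, so one has to be sure the iteration $\rho_e^{-k}$ aligns with the $b_i$'s rather than the $a_i$'s.
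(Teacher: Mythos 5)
Your proposal is correct and follows essentially the same route as the paper, which establishes the $k=1$ case by the identical geometric bookkeeping (the $k-1$ vertical Type~1 edges giving $\mathbf{L}^{k-1}$, then the $\mathbf{B}$/$\mathbf{C}$ dichotomy according to whether $(2k-1)+i\R$ and $2k+\tfrac{\e_1}{2}+\tfrac12 e^{it}$ share the vertex $2k-1$) in the paragraphs preceding the proposition, and then defers the propagation under the $\Theta$-equivariant shift $\rho_e$ to Sections 6.2--6.3 of \cite{BM1}. Your digit assignment ($\mathbf{B}\leftrightarrow(2k,-1)$ for $\gplus\in[2k-1,2k)$, $\mathbf{C}\leftrightarrow(2k,+1)$ for $\gplus\in[2k,2k+1)$) agrees with the convention of Section \ref{intro_ecf} and with the table, correctly resolving the apparent swap in the sentence of Section \ref{evenoddcut} that precedes the proposition.
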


\begin{prop}
 Define $\tilde \Omega_e=[0,1]\times[-1,1]\times\{\pm1\}$ and $\tilde T_e:\tilde\Omega_e\to\tilde\Omega_e$ by
\[\tilde T_e(x,y,\e)=\big(\overline T_e(x,y),-\e_1(x)\e\big),
\]
with $\overline T_e$ as in equality \eqref{tebar}. The map $J_e:\SS_e \rightarrow \tOmega_e$, $J_e (x,y)=\operatorname{sign} (x)
(\frac{1}{x},-y,1)$ is invertible. Direct verification reveals the equality \begin{equation}\label{eq6.2} 
J_e \rho_e J_e^{-1} = \tT_e .
\end{equation}
\end{prop}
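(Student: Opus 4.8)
The plan is to exhibit $J_e^{-1}$ in closed form---whence bijectivity is immediate---and then establish \eqref{eq6.2} by a single point-chase, the only real labor being to track signs and to identify which branch of the even Gauss map is active. Writing $\e=\sgn(x)$, one has $J_e(x,y)=(\tfrac1{|x|},-\e y,\e)$, so the image is $(0,1)\times(-1,1)\times\{\pm1\}$. I would then verify directly that
\[
J_e^{-1}(u,v,\e)=\Bigl(\tfrac{\e}{u},\,-\e v\Bigr)
\]
is a two-sided inverse, using $\sgn(\e/u)=\e$ (since $u>0$) and $\e^2=1$; the points with $u=0$ or lying over a branch endpoint form a null set that may be discarded without affecting any later measure-theoretic statement.

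For \eqref{eq6.2}, fix $(u,v,\e)$ with $u\in(0,1)$ and set $(x,y)=J_e^{-1}(u,v,\e)=(\e/u,-\e v)$, so that $\sgn(x)=\e$ and $|x|=1/u$. The crucial structural point is that the integer $k\ge 1$ with $|x|\in[2k-1,2k+1)$---equivalently $u\in(\tfrac1{2k+1},\tfrac1{2k-1}]$---is precisely the one for which $a_1(u)=2k$, and that on this interval $\e_1(u)=\sgn\bigl(\tfrac1u-2k\bigr)=\sgn(1-2ku)$. Applying $\rho_e$ and pulling a factor of $\e$ out of each denominator gives $\rho_e(x,y)=\bigl(\tfrac{\e u}{2ku-1},\tfrac{\e}{2k+v}\bigr)=:(x',y')$, whence $\sgn(x')=\e\,\sgn(2ku-1)=-\e\,\e_1(u)$. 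Finally I would apply $J_e$ to $(x',y')$ and read off the three coordinates, using $\e_1(u)(1-2ku)=|1-2ku|$: the first is $\tfrac1{|x'|}=\tfrac{|2ku-1|}{u}=\e_1(u)\bigl(\tfrac1u-2k\bigr)=T_e(u)$; the second is $-\sgn(x')\,y'=\e^2\e_1(u)\tfrac1{2k+v}=\tfrac{\e_1(u)}{a_1(u)+v}$; the third is $\sgn(x')=-\e_1(u)\e$. By \eqref{tebar} the first two coordinates are $\overline T_e(u,v)$ and the third is the required new sign, so the triple equals $\tT_e(u,v,\e)$, establishing \eqref{eq6.2}.

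The argument involves no conceptual difficulty; the step needing care is the sign-and-branch bookkeeping, above all the identity $\sgn(x')=-\e\,\e_1(u)$, since this single fact simultaneously forces the factor $\e_1(u)$ to appear in the second coordinate and the correct new sign $-\e_1(u)\e$ to appear in the third. Along the way I would also confirm that $\rho_e(\SS_e)\subseteq\SS_e$---for instance $|x'|=1/T_e(u)\ge 1$ and $|y'|=1/(2k+v)<1$ because $v\in(-1,1)$ and $k\ge 1$---so that $J_e\rho_e J_e^{-1}$ is genuinely defined, and I would dispose of the boundary and $u=0$ points as a routine null-set aside.
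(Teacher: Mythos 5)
Your proposal is correct, and it is precisely the ``direct verification'' that the paper invokes without writing out: the same explicit inverse $J_e^{-1}(u,v,\e)=(\e/u,-\e v)$, the same identification of the branch $a_1(u)=2k$ with $|x|\in[2k-1,2k+1)$, and the same sign bookkeeping yielding $\sgn(x')=-\e\,\e_1(u)$. Nothing further is needed.
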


As before, the push-forwards of the measure $\frac{d\alpha d\beta d\theta}{(\alpha-\beta)^2}$ on $T^1\MM_e$ under the map
$\pi\circ J_e$, is 
$\overline{T}_e$-invariant, where $\pi(x,y,\e)=(x,y)$. We get that the invariant measure for $\overline{T_e}$ is $\frac{dxdy}{(1+xy)^2}$ as in the regular continued fraction case. 

Note that the ideal quadrilateral $\{-1,0,1,\infty\}$ is a threefold cover of the fundamental Dirichlet region $\{z\in\H: |\Re z|\leq\frac{1}{2},|z-1|\geq 1, |z+1|\geq 1\}$ for $\Z_3\ast\Z_3\curvearrowright \H$, which Boca and the author used to describe the odd continued fractions in \cite{BM1}. Thus, the new cutting sequence allows us to massively simplify that cutting sequence description in the same manner.

%%%%%%%%%%%%%%%%%%%%%%%%%%%%%
\section{Even continued fraction slow down map}\label{ecf_farey}
%%%%%%%%%%%%%%%%%%%%%%%%%%%%%

\begin{figure}
 \includegraphics[width=.7\textwidth]{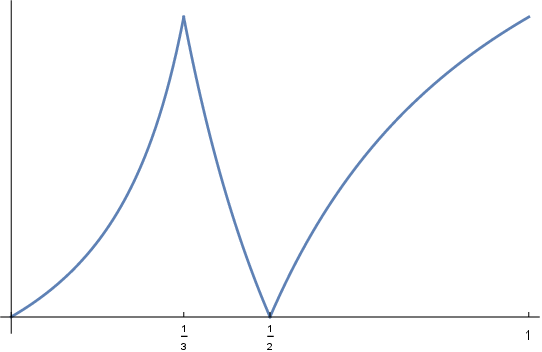}
 \caption{The even Farey map is a slowdown map for the even Gauss map.}\label{efareymap}
\end{figure}

The \emph{even Farey} or \emph{Romik map} $F_e:[0,1]\to[0,1],$ 
\[F_e(x)=
\begin{cases}
\frac{x}{1-2x}& x\in\left[0,\frac{1}{3}\right),\\
\frac{1-2x}{x}& x\in\left[\frac{1}{3},\frac{1}{2}\right),\\
\frac{2x-1}{x}& x\in\left[\frac{1}{2},1\right]
\end{cases}\] shown in Figure \ref{efareymap} is a slowdown of the even Gauss map. That is, for every $x$ there exists an $n\in\N$ such that $F_e^n(x)=T_e$. In this case, we find that $n=k$ for $x\in\left[\frac{1}{2k+1},\frac{1}{2k-1}\right)$. On the even continued fraction expansion of $x=[\![(a_1,\e_1) (a_2,\e_2)\ldots ]\!]$, we find that 
\[F_e([\![(a_1,\e_1) (a_2,\e_2)\ldots ]\!])=
\begin{cases}
[\![(a_1-2,\e_1) (a_2,\e_2)\ldots ]\!]& a_1>2,\\
[\![ (a_2,\e_2)\ldots ]\!]& a_1=2.
\end{cases}\] 
This map was shown to be ergodic and a section of the geodesic flow of the three horned sphere by \cite{AaD, Rom}. Romik \cite{Rom} also used this map to generate Pythagorean triples. 

The natural extension the even Farey map is $\overline{F_e}:[0,1]^2 \to[0,1]^2,$ 
\begin{equation}\label{fe_ext} (x,y)\mapsto 
    \begin{cases}
        \left(\frac{x}{1-2x},\frac{y}{1+2y}\right)
            & x\in\left[0,\frac{1}{3}\right),\\
        \left(\frac{1-2x}{x},\frac{1}{2+y}\right)
            & x\in\left[\frac{1}{3},\frac{1}{2}\right),\\
        \left(\frac{2x-1}{x},\frac{1}{2-y}\right)
            & x\in\left[\frac{1}{2},1\right]
    \end{cases}
\end{equation}
Then $\overline{F_e}^{k}(x,y)=\overline{T_e}(x,y)$, so $\overline{F_e}$ is a slowdown map of $\overline{T_e}.$ Looking at the continued fraction expansion of $x=[\![(a_1,\e_1) (a_2,\e_2)\ldots ]\!],y=\langle\! \langle (a_0,\e_0) (a_{-1},\e_{-1})\ldots \rangle\! \rangle$ we find that 
\begin{align*}
	\overline{F}([\![(a_1,\e_1) (a_2,\e_2)\ldots ]\!],&\langle\! \langle (a_0,\e_0) (a_{-1},\e_{-1})\ldots \rangle\! \rangle)\\*
	&=
	\begin{cases}
		([\![(a_1-2,\e_1) (a_2,\e_2)\ldots ]\!],\langle\! \langle (a_0+2,\e_0) (a_{-1},\e_{-1})\ldots \rangle\! \rangle& a_1>2,\\
		([\![(a_2,\e_2)\ldots ]\!],\langle\! \langle (a_1,\e_1) (a_0,\e_0) (a_{-1},\e_{-1})\ldots \rangle\! \rangle)& a_1=2.
	\end{cases}    
\end{align*}

As with the regular Farey map, we remove some restrictions on the set of geodesics. Consider the set $\wA_e$ of geodesics $\gamma$ on $\H$ with endpoints $(\gplus,\gneg)$ in
\begin{equation}
 \wS_e=\pm\left((1,\infty)\times(-\infty,1)\right).
\end{equation}
All of these geodesics cross either $1+i\R$ or $-1+i\R$. We define $\x$ to be the point where $\gamma$ crosses $\pm1+i\R$. 

We define a new cross section $Y_e$ of unit tangent vectors based on $\pi(\pm1+i\R)$, the geodesic between the cusps, that point along geodesics in $\wA_e$. We define $\y$ to be the base point of the first return to $Y_e$ under the geodesic flow. In the covering space $\H$, $\y$ lifts to next time $\gamma$ crosses a Type 1 side after $\x$. Since the unit tangent vector based at $\y$ and pointing along $\gamma$ is in $Y_e$, and uniquely defines $\gamma$, we may identify the vector with its base point.

Next, we modify the action on $\SS_e$ for $\wS_e$. Define $\wRhoe$ by: 
\[  (x,y)\mapsto
\begin{cases}
(x-2\epsilon,y-2\epsilon), & \epsilon x\in(3,\infty)\\
\left(\frac{1}{2\epsilon - x}, \frac{1}{2\epsilon - y}\right), & \epsilon x\in(1,3],
\end{cases}
\]
where $\epsilon=\operatorname{sign}(x)$. As in the previous cases, $\wRhoe$ induces a map on $\wA_e$ and the first return map on $Y_e$. Next, we consider the map $\wRhoe$ induces on the cutting sequence for $\gamma$, as well as the even continued fraction expansion of the endpoints. Figure \ref{ecf_farey_ex} shows a geodesic and its image under two iterations of $\sigma_e$.

We should also consider the extended even continued fraction expansion of $|\gneg|>1$. Let us consider $\e=-1$. Then $\gneg\in[2k-1,2k+1)$ for $k\geq 0$. When $k=0$, $\gneg\in[-1,1)$ as in the Gauss case. When $k>0$, $\gamma$ crosses $2k-1+i\R,\dots,1+i\R,-1+i\R$, giving the cutting sequence $\mathbf{R^{k}}$. Since cutting sequences are preserved by covering transformations, the cutting sequence for $\gneg-2k=\cfrac{\e_{-2}}{a_{-2}+\dots}\in[-1,1)$ is $\dots \mathbf{L^{j-1}C}\x\dots$ when $\e_{-2}=+1,\frac{a_{-2}}{2}=j$ and $\dots \mathbf{R^{j-1}B}\x\dots$ when $\e_{-2}=-1,\frac{a_{-2}}{2}=j$. Thus, the cutting sequences $\dots \mathbf{L^{j-1}CR^{k}}\x\dots$ and $\dots \mathbf{L^{j-1}C R^{k-1}}\x\dots$ correspond to $\gneg=2k+\frac{e_{-2}}{2j+\dots}$. The $\e=+1$ case is analogous, with $\mathbf{L}$ and $\mathbf{R}$ switched.

\begin{figure}
 \includegraphics[width=\textwidth]{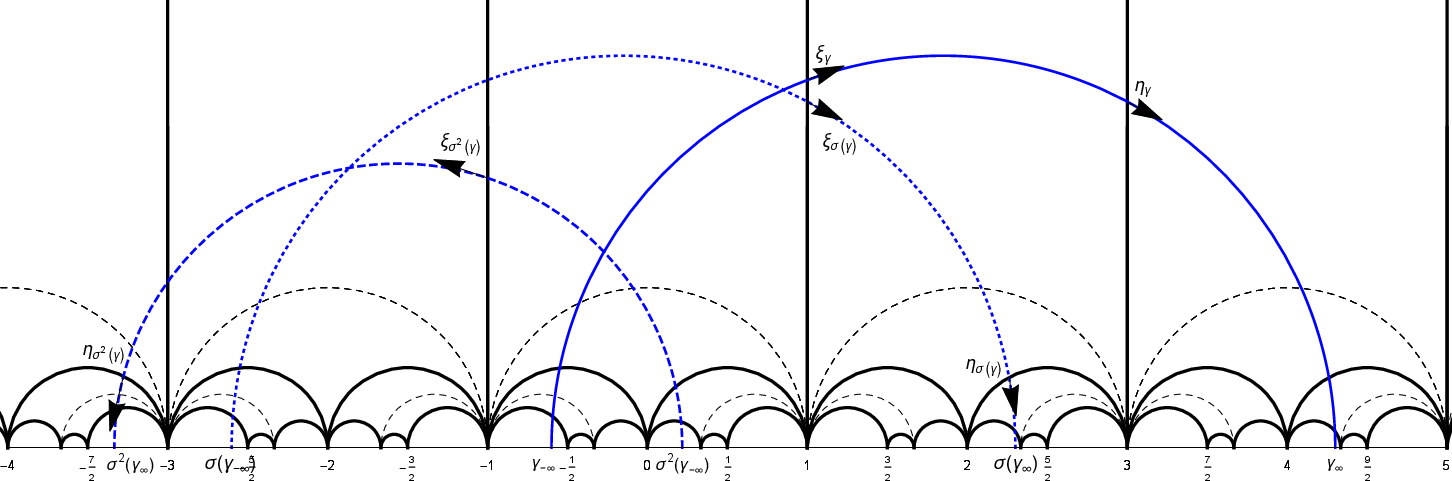}
 \caption{Geodesic $\gamma$ (solid), $\sigma_e(\gamma)$ (dotted), and $\sigma_e^2(\gamma)=\rho_e(\gamma)$ (dashed). The cutting sequence for $\gamma$ is $\dots \mathbf{BC\x L\y CRBB}\dots$, 
 for $\sigma_e(\gamma)$ is $\dots \mathbf{BCL}\xi_{\sigma_e(\gamma)}\mathbf{C} \eta_{\sigma_e(\gamma)} \mathbf{RBB}\dots$, and 
 for $\sigma_e^2\gamma$ is $\dots \mathbf{BCL C} \xi_{\sigma_e^2(\gamma)}\mathbf{RBB}\eta_{\sigma_e^2(\gamma)}\mathbf{BB}\dots$}
 \label{ecf_farey_ex}
\end{figure}

Now we will consider the effect of $\sigma_e$ on both the cutting sequence of $\gamma$ and the continued fraction expansions of $(\gplus,\gneg)$. As we are more interested in $\gplus,$ we will focus on the $\e=+1$ case when necessary to differentiate between $\e=+1$ and $\e=-1$. In order to avoid fractions in the exponents, let $k_i=\frac{a_i}{2}$.

\begin{description}[style=unboxed,leftmargin=.5cm]
    \item[Case 1, $\e\gplus >3$] In this case, 
    \[\raisebox{1em}{$\gplus=\e$}\left(\raisebox{1em}{$a_0+\cfrac{\e_0}{a_1+\cfrac{\e_1}{a_2+\dots}}$}\right)\raisebox{1em}{$,\
    \gneg=-\e$}\left(\raisebox{1em}{$a_{-1}+\cfrac{\e_{-2}}{a_{-2}+\cfrac{\e_{-3}}{a_{-3}+\dots}}$}\right).\]
    Then $\wRhoe$ is orientation preserving and
    \[\raisebox{1em}{$\wRhoe(\gplus)=\e$}\left(\raisebox{1em}{$a_0-2+\cfrac{\e_0}{a_1+\cfrac{\e_1}{a_2+\dots}}$}\right)\raisebox{1em}{$,
    \ \wRhoe(\
    \gneg)=-\e\left({a_{-1}+2+\cfrac{\e_{-2}}{a_{-2}+\dots}}\right).$}\]

    When $\e=+1,$ $\gamma$ has cutting sequence $\dots \mathbf{L^{k_{-1}}}\x \mathbf{L} \y \mathbf{L^{k_0-2}}\dots$ for $k_0\geq2$.
    When  $k_0>2$, $\wRhoe(\gamma)$ has cutting sequence 
    $\dots \mathbf{L^{k_{-1}+1}}\xi_{\wRhoe(\gamma)} \mathbf{L}\eta_{\wRhoe(\gamma)} \mathbf{L^{k-3}}\dots$ 
    with $\gplus>3$. 
    When $k_0=2$, $\wRhoe(\gamma)$ has cutting sequence 
    $\dots \mathbf{L^{k_{-1}+1}}\xi_{\wRhoe(\gamma)} \mathbf{L}\eta_{\wRhoe(\gamma)} \mathbf{B}\dots$ or $\mathbf{L^{k_{-1}+1}}\xi_{\wRhoe(\gamma)} \mathbf{L}\eta_{\wRhoe(\gamma)} \mathbf{C}\dots$, and $1<\gplus<3$. 
    When $\e=-1,$ the $\mathbf{L}$ are replaced with $\mathbf{R}$.

    \item[Case 2, $2<|\gplus|\leq3$] Now 
    \begin{align*}\raisebox{1em}{$
        \gplus=\e$}&\left(\raisebox{1em}{$2+\cfrac{1}{a_1+\cfrac{\e_1}{a_2+\dots}}$}\right)\raisebox{1em}{$,\
    \gneg=-\e$}\left(\raisebox{1em}{$a_{-1}+\cfrac{\e_{-2}}{a_{-2}+\cfrac{\e_{-3}}{a_{-3}+\dots}}$}\right),\\
    \wRhoe(\gplus)=-&\e\left(a_1+\cfrac{\e_1}{a_2+\dots}\right),\wRhoe(\
    \gneg)=\e\cfrac{1}{a_{-1}+2+\cfrac{\e_{-2}}{a_{-2}+\dots}},\end{align*} 
    and $\wRhoe$ is orientation reversing.

    When $\e=+1,$  $\gamma$ has cutting sequence $\dots\mathbf{L^{k_{-1}}}\x \mathbf{C} \y \mathbf{R^{k_1-1}}\dots$ for $k_1\geq1$.  When $k_1>1,$ $\wRhoe(\gamma)$ has cutting sequence $\dots\mathbf{L^{k_{-1}}C}\x \mathbf{R} \y \mathbf{R^{k_1-2}}\dots$ and $\gplus<-3$.
    When $k_1=1$, $\wRhoe(\gamma)$ has cutting sequence $\dots\mathbf{L^{k_{-1}}C}\x \mathbf{B} \y \dots$ or $\dots\mathbf{L^{k_{-1}}C}\x \mathbf{C} \y \dots$, and $-3\leq\gplus<-1$. 
     When $\e=-1,$ the roles of $\mathbf{L}$ and $\mathbf{R}$ are reversed.
    
    \item[Case 3, $1<|\gplus|\leq2$] Now 
    \begin{align*}\raisebox{1em}{$
        \gplus=\e$}&\left(\raisebox{1em}{$2-\cfrac{1}{a_1+\cfrac{\e_1}{a_2+\dots}}$}\right)\raisebox{1em}{$,\
    \gneg=-\e$}\left(\raisebox{1em}{$a_{-1}+\cfrac{\e_{-2}}{a_{-2}+\cfrac{\e_{-3}}{a_{-3}+\dots}}$}\right),\\
    \wRhoe(\gplus)=&\e\left(a_1+\cfrac{\e_1}{a_2+\dots}\right),\wRhoe(\
    \gneg)=-\e\cfrac{-1}{a_{-1}+2+\cfrac{\e_{-2}}{a_{-2}+\dots}},\end{align*}
    and $\sigma_e$ is orientation preserving.

    Finally, when $\e=+1$, $\gamma$ has cutting sequence $\dots\mathbf{L^{k_{-1}}}\x \mathbf{B} \y \mathbf{L^{k_1-1}}\dots$ for $k_1\geq1$.  When $k_1>1,$ $\wRhoe(\gamma)$ has cutting sequence $\dots\mathbf{L^{k_{-1}}B}\xi_{\wRhoe(\gamma)} \mathbf{L}\eta_{\wRhoe(\gamma)} \mathbf{L^{k_1-2}}\dots$ and $\gplus>3$.
    When $k_1=1$, $\wRhoe(\gamma)$ has cutting sequence $\dots \mathbf{L^{k_{-1}}B}\xi_{\wRhoe(\gamma)} \mathbf{B}\eta_{\wRhoe(\gamma)} \dots$ or $\dots \mathbf{L^{k_{-1}}B}\xi_{\wRhoe(\gamma)} \mathbf{C}\eta_{\wRhoe(\gamma)} \dots$, and $1<\gplus\leq3$. 
    Then 
    \[\wRhoe(\gplus)=\e\left(a_1+\cfrac{\e_1}{a_2+\dots}\right),\wRhoe(\
    \gneg)=-\e\cfrac{-1}{2+\cfrac{\e_{-1}}{a_{-1}+\dots}},\] 
    and $\wRhoe$ is orientation preserving. Again, when $\e=-1$, the $\mathbf{L}$s are replaced with $\mathbf{R}$.
\end{description}

%That is, the map induced by $\wRhoe$ on the cutting sequence shifts by one symbol. 

\begin{prop}
    Define $\tilde \Omega_e=[0,1]^2\times\{\pm1\}$ and $\tilde F_e:\tilde\Omega_e\to\tilde\Omega_e$ by
    \[\tilde F_e(x,y,\delta)=
    \begin{cases}
        \big(\overline F_e(x,y),\delta\big), 
            & x\in[0,\frac{1}{3})\cup[\frac{1}{2},1),\\
        \big(\overline F_e(x,y),-\delta\big), 
            & x\in[\frac{1}{3},\frac{1}{2})
    \end{cases}
    \]
    with $\overline F_e$ as in equality \eqref{fe_ext}. The map $K_e:\SS_e \rightarrow \tOmega_e$, $K_e (x,y)=
    (\frac{\e}{x},\frac{1}{2-\e y},\e)$ , where $\e=\operatorname{sign}(x)$, is invertible and 
       $K_e \wRhoe  = \tT_e K_e.$
   Furthermore, the pushforward of the invariant measure for $\wRhoe$ is invariant for $\overline{F_e}$ and given by $\frac{dxdy}{(x+y-2xy)^2}.$
\end{prop}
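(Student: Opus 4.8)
The plan is to verify the three claims of the proposition in order: invertibility of $K_e$, the conjugacy $K_e\sigma_e = \tilde F_e K_e$, and the invariant measure formula. For invertibility, I would write down $K_e^{-1}$ explicitly by solving $(u,v,\delta)=(\tfrac{\e}{x},\tfrac{1}{2-\e y},\e)$ for $(x,y)$, giving $\e=\delta$, $x=\tfrac{\delta}{u}$, and $y = \delta\bigl(2-\tfrac1v\bigr)$; then check that $K_e^{-1}$ maps $\tilde\Omega_e$ (minus the appropriate measure-zero boundary pieces) back into $\wS_e$. This is routine: $u=\tfrac{\e}{x}\in(0,1)$ since $|x|>1$, and $v=\tfrac{1}{2-\e y}$ ranges over $(0,1)$ as $\e y$ ranges over $(-\infty,1)$, matching $\wS_e=\pm((1,\infty)\times(-\infty,1))$.

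The conjugacy is the computational heart, but it is bookkeeping rather than difficulty. I would split into the same three cases used to define $\wRhoe$ and $\overline F_e$, and in each case compose $K_e$ with the relevant branch of $\wRhoe$ and compare with the relevant branch of $\tilde F_e$ applied to $K_e(x,y)$. Concretely, writing $\e=\sgn(x)$ and $u = \e/x \in(0,1)$: the branch $\e x\in(3,\infty)$ (so $u\in(0,\tfrac13)$) is the translation $(x,y)\mapsto(x-2\e, y-2\e)$, which is orientation preserving so $\sgn$ is unchanged; one checks $\tfrac{\e}{x-2\e}=\tfrac{u}{1-2u}$ and $\tfrac{1}{2-\e(y-2\e)} = \tfrac{1}{4-\e y}$, and compares with $\overline F_e$'s first branch evaluated at $(u, \tfrac{1}{2-\e y})$. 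The branch $\e x\in(2,3]$ is orientation reversing, giving the sign flip that matches the middle branch of $\tilde F_e$ on $[\tfrac13,\tfrac12)$; here $(x,y)\mapsto(\tfrac{1}{2\e-x},\tfrac{1}{2\e-y})$ and $\sgn$ of the first coordinate becomes $-\e$, and one verifies $\tfrac{-\e}{\,1/(2\e-x)\,} = \e(2\e - x)\cdot(-1) = x - 2\e$, wait — I would instead directly compute the new $(u',v',\delta')$ and match $\tfrac{1-2u}{u}$ and $\tfrac{1}{2+(\text{old }v\text{-coordinate})}$. The branch $\e x\in(1,2]$ is orientation preserving and matches the third branch of $\overline F_e$ on $[\tfrac12,1]$. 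The reason the three-way split of $\wRhoe$ (translation vs. two inversions) maps onto the three branches of $\overline F_e$ is exactly the $n$-fold-slowdown structure already recorded in the excerpt, so conceptually nothing new happens; the work is to confirm each of the six rational-function identities.

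Once the conjugacy holds, the measure statement follows formally. The invariant measure for $\wRhoe$ (equivalently for the first-return map on $Y_e$) is inherited from $\tfrac{d\alpha\,d\beta\,d\theta}{(\alpha-\beta)^2}$ on $T^1\MM_e$, as in Sections~\ref{series}, \ref{farey}, \ref{ecf_gauss}: integrating out the fiber direction gives $\tfrac{dx\,dy}{(x-y)^2}$ as the $\wRhoe$-invariant measure on $\wS_e$ (the first-return map of a flow preserving the Liouville measure preserves the induced measure on the cross-section). Then I push forward by $K_e$: with $u=\e/x$, $v=1/(2-\e y)$, the Jacobian is $\left|\tfrac{\partial(u,v)}{\partial(x,y)}\right| = \tfrac{1}{x^2}\cdot\tfrac{\e}{(2-\e y)^2} = \tfrac{1}{x^2(2-\e y)^2}$ up to sign, so
\[
\frac{dx\,dy}{(x-y)^2} = \frac{x^2(2-\e y)^2\,du\,dv}{(x-y)^2}.
\]
Substituting $x=\e/u$, $\e y = 2 - 1/v$ and simplifying $x - y = \tfrac{\e}{u} - \e(2 - \tfrac1v) = \e\bigl(\tfrac1u - 2 + \tfrac1v\bigr) = \e\cdot\tfrac{v + u - 2uv}{uv}$ turns the expression into $\tfrac{du\,dv}{(u+v-2uv)^2}$ after the $x^2$ and $(2-\e y)^2$ factors cancel the $u^2 v^2$ in the denominator; this is exactly the claimed density, and it is manifestly the same object on both copies of $[0,1]^2$, hence $\overline F_e$-invariant (the $\{\pm1\}$ factor and the sign toggling in $\tilde F_e$ are irrelevant to the pushforward onto $[0,1]^2$). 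The main obstacle is purely organizational: keeping the $\e$-dependence and the orientation-reversing case straight across all three branches so that the sign bookkeeping in $\tilde F_e$ comes out right; I would handle this by doing the $\e=+1$ computations in full and invoking the built-in $(x,y)\mapsto(-x,-y)$ symmetry of $\wS_e$, $\wRhoe$, and the measure to reduce $\e=-1$ to it.
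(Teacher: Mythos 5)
Your proposal is correct and follows essentially the same route as the paper: a branch-by-branch direct verification of the conjugacy (translation branch matching $[0,\tfrac13)$, the orientation-reversing inversion matching $[\tfrac13,\tfrac12)$ with the sign flip, the orientation-preserving inversion matching $[\tfrac12,1]$), followed by pushing forward the Möbius-invariant density $\tfrac{d\alpha\,d\beta}{(\alpha-\beta)^2}$ under $K_e$ to obtain $\tfrac{dx\,dy}{(x+y-2xy)^2}$. The only cosmetic differences are that you make the invertibility of $K_e$ and the provenance of the cross-section measure (from the Liouville measure) explicit, which the paper simply asserts.
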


\begin{proof} We check $K_e^{-1}\wRhoe K_e$ \[(\alpha,\beta)\mapsto \left(\frac{\e}{\alpha},\frac{1}{2-\e\beta },\e\right)\mapsto
    \begin{cases}
        \left(\frac{\e}{\alpha-2\e},\frac{1}{4-\e y},\e\right),
            & 3<\e\alpha,\\
        \left(-2+\e\alpha,\frac{2-\e\beta}{5 -2\e\beta},-\e\right),
            & 2<\e\alpha\leq3,\\
        \left(2-\e\alpha,\frac{2-\e\beta}{3 -2\e\beta},\e\right),
            & 1<\e\alpha\leq2.
    \end{cases}
    \mapsto
    \begin{cases}
        \left(\alpha-2\e,\beta-2\e\right),\\
        \left(\frac{1}{2\e-\alpha},\frac{1}{2\e -\beta},-\e\right),\\
        \left(\frac{1}{2\e-\alpha},\frac{1}{2\e -\beta},\e\right).
    \end{cases}
    \]
    Since $\wRhoe$ is defined piecewise by M\"obius transformations, the invariant density is given by $\frac{d\alpha d\beta}{(\alpha-\beta)^2}$. Let $(\alpha,\beta)=K_e^{-1}(x,y,\e)=\left(\frac{\e}{x},\frac{-\e}{y}+2\e\right)$. Thus, the pushforward measure is \[
    \frac{d(\tfrac{\e}{x})d(\tfrac{-\e}{y}+2\e)}{\left(\tfrac{\e}{x}-(\tfrac{-\e}{y}+2\e)\right)^2}
   % =\frac{dxdy}{x^2y^2\left(\tfrac{1}{x}+\tfrac{1}{y}-2\right)^2}
    =\frac{dxdy}{\left(x+y-2xy\right)^2}.\qedhere
   \]

\end{proof}

%%%%%%%%%%%%%%%%%%%%%%%%%%%%%
\section{Connection between the Farey map and the Farey and Lehner continued fractions}\label{diagram}
%%%%%%%%%%%%%%%%%%%%%%%%%%%%%

Lehner \cite{Leh} introduced the slow continued fractions of the form \[a_0+\cfrac{e_0}{a_1+\cfrac{e_1}{a_2+\dots}},\] where $(a_i,e_i)=(1,+1)$ or $(2,-1)$. Dajani and Kraaikamp \cite{DK2} expanded on this definition, introducing the map $L:[1,2)\to [1,2)$  where \[
    x\mapsto \begin{cases}
        \frac{-1}{x-2}, & x\in \left[1,\frac{3}{2}\right)\\
        \frac{1}{x-1}, & x\in \left[\frac{3}{2},2\right)
    \end{cases}\]
which is conjugate to the Farey map by $x\mapsto x+1$. They also introduced the dual continued fraction expansion, which they call the \emph{Farey expansions}  and the natural extension 
$\LL:[1,2)\times [-1,\infty)\to[1,2)\times[-1,\infty)$ by 
\[\LL(x,y)=\begin{cases}
    \left(\frac{-1}{x-2},\frac{-1}{2+y}\right), 
        &  x \in \left[1,\frac{3}{2}\right)\\
    (\frac{1}{x-1}, \frac{1}{1+y}), & x\in\left[\frac{3}{2}, 2\right).
\end{cases}\]
While not stated in \cite{DK2}, $\LL$ is conjugate to the natural extension of the Farey map via $(x+1,\frac{1}{1-y}-2)$. This conjugacy is represented by the square that forms the bottom  of the ``box" in \eqref{box_diag}.  The conjugacy between the map on $\SS$ and the natural extension of the Farey map, given in Section \ref{farey} gives the square at the back of the ```box.''

\begin{equation}\label{box_diag}
    \begin{tikzcd}[ampersand replacement=\&,column sep=6em,row sep=3em]
        \SS \arrow[rr,"\sigma"] 
        \arrow[rd,"{\left(\frac{1}{x}+\e,\frac{1}{y}+\e\right)}",pos=0.6]
        \arrow[dd, "{\left(\frac{\e}{x},\e \left(\frac{-1}{y}+1\right)\right)}"] 
            \&  \&  
                    \SS \arrow[dd] 
                    \arrow[rd,"{\left(\frac{1}{x}+\e,\frac{1}{y}+\e\right)}",pos=0.7]                                    
                    \\
            \& \SS_L \arrow[dd,"{\left(\e x,-\e y\right)}",pos=0.3,crossing over]
            \arrow[rr,"{\begin{cases}
            \left(\frac{1}{2\e-x},\frac{1}{2\e-y}\right), & 1\leq\e x<\frac{3}{2}\\
            (\frac{1}{\e-x}, \frac{1}{\e-y}), & \frac{3}{2}\leq\e x< 2
            \end{cases}}",pos=.2,crossing over]
                 \&  \&  
                        \SS_L \arrow[dd,"{\left(\e x,-\e y\right)}",pos=0.3]
                                  \\
        {[0,1)^2} \arrow[rd,"{\left(x+1,\frac{1}{1-y}-2\right)}",swap]
        \arrow[rr,"\overline{F}",pos=.7]
        \&   \&
                {[0,1)^2} \arrow[rd, "{\left(x+1,\frac{1}{1-y}-2\right)}"]    
              \\
            \& {[1,2)\times [-1,\infty)} 
            \arrow{rr}
            {\LL} 
            \arrow[d, "{\left(\frac{1}{x},y\right)}"',swap]
                \&  \&
                    {[1,2)\times [-1,\infty)} 
                    \arrow[d,"{\left(\frac{1}{x},y\right)}"',swap] 
         \\
            \& {(\frac{1}{2},1]\times[-1,\infty)} 
            \arrow{rr}[swap]
            {\begin{cases}
                \left(\frac{1}{x}-1,\frac{-1}{2+y}\right), & \frac{1}{2}< x\leq\frac{2}{3}\\
                (\frac{-1}{x}+2, \frac{1}{1+y}), & \frac{2}{3}< x\leq 1
            \end{cases}} 
                \& \& 
                        {(\frac{1}{2},1]\times[-1,\infty)} 
        \end{tikzcd}
    \end{equation}

The author used cutting sequences to describe the Lehner and Farey expansions of the 
\[(\gplus,\gneg)\in\SS_L
=\pm\big(([1,2)\times (-\infty,1]\big)\]
respectively \cite{Mer}. In the same paper, the author described an alternate slow continued fraction expansion on $(\frac{1}{2},1]$, which is also dual to the Farey continued. For $x\in(\frac{1}{2},1],$ 
\[x=\cfrac{1}{a_1+\cfrac{e_1}{a_2+\cfrac{e_2}{a_3+\dots}}},\]
and the Gauss map for this continued fraction expansion $L^\ast : (\frac{1}{2},1] \to (\frac{1}{2},1]$ is given by: 
\[x\mapsto \begin{cases}
\frac{1}{x}-1, & x\in\left(\frac{1}{2},\frac{2}{3}\right],\\
\frac{-1}{x}+2, & x\in\left(\frac{2}{3},1\right].
\end{cases}
\]
By construction, $L^\ast$ is conjugate to $L$ by the map $x\mapsto \frac{1}{x}$ and to the Farey map by $x\mapsto 
\frac{1}{x+1}$. The conjugacies between the maps on $\SS_L, [1,2)\times[-1,\infty),$ and $(\frac{1}{2},1]\times[-1,\infty)$ are given by the front of the ``box'' and lower square in in \eqref{box_diag}.

\subsection*{Acknowledgements} The modular surface in Figure \ref{modboat_figure} was originally created with Steve Trettel while he and the author were at ICERM. The author would also like to thank Florin Boca for originally proposing this problem,  for help in the early stages of the project, and feedback draft.

%%%%%%%%%%%%%%%%%%%%%%%%%%%%%
%%%%%%%%%%%%%%%%%%%%%%%%%%%%%

\bibliographystyle{alpha}
\bibliography{even_farey_flows}

\begin{thebibliography}{Rom08}

\bibitem[AD99]{AaD}
Jon Aaronson and Manfred Denker.
\newblock The {Poincar\'{e}} series of $\mathbb{C}\setminus\mathbb{Z}$.
\newblock {\em Ergodic Theory Dynam. Systems}, 19(1):1--20, 1999.

\bibitem[Art24]{Art}
Emil Artin.
\newblock Ein mechanisches system mit quasiergodischen bahnen.
\newblock {\em Abh. Math. Sem. Univ. Hamburg}, 3(1):170--175, 1924.

\bibitem[BL97]{BL}
Max Bauer and Artur Lopes.
\newblock A billiard in the hyperbolic plane with decay of correlation of type
  $n^{-2}$.
\newblock {\em Discrete Contin. Dyn. Syst.}, 3(1):107--116, 1997.

\bibitem[BL18]{BocaL}
Florin~P. Boca and Christopher Linden.
\newblock On {M}inkowski type question mark functions associated with even or
  odd continued fractions.
\newblock {\em Monatsh. Math.}, 187(1):35--57, 2018.

\bibitem[BM18]{BM1}
Florin~P. Boca and Claire Merriman.
\newblock Coding of geodesics on some modular surfaces and applications to odd
  and even continued fractions.
\newblock {\em Indag. Math.}, 29(5):1214 -- 1234, 2018.

\bibitem[DK00]{DK2}
Karma Dajani and Cor Kraaikamp.
\newblock `{T}he mother of all continued fractions''.
\newblock {\em Colloq. Math.}, 84/85(part 1):109--123, 2000.
\newblock Dedicated to the memory of Anzelm Iwanik.

\bibitem[Hee19]{Hee}
Byron Heersink.
\newblock Distribution of the periodic points of the {F}arey map.
\newblock {\em Comm. Math. Phys.}, 365(3):971--1003, 2019.

\bibitem[KL96]{KL}
Cornelis Kraaikamp and Artur Lopes.
\newblock The theta group and the continued fraction expansion with even
  partial quotients.
\newblock {\em Geom. Dedicata}, 59(3):293--333, 1996.

\bibitem[Leh94]{Leh}
Joseph Lehner.
\newblock Semiregular continued fractions whose partial denominators are 1 or
  2.
\newblock In {\em The mathematical legacy of {W}ilhelm {M}agnus: groups,
  geometry and special functions ({B}rooklyn, {NY}, 1992)}, volume 169 of {\em
  Contemp. Math.}, pages 407--407. Amer. Math. Soc., Providence, RI, 1994.

\bibitem[Mer22]{Mer}
Claire Merriman.
\newblock Geodesic flows and the mother of all continued fractions.
\newblock {\em International Journal of Number Theory}, 18(4):931--953, 2022.

\bibitem[Rom08]{Rom}
Dan Romik.
\newblock The dynamics of {P}ythagorean triples.
\newblock {\em Trans. Amer. Math. Soc.}, 360(11):6045--6064, 2008.

\bibitem[Sch82]{Sch1}
Fritz Schweiger.
\newblock Continued fractions with odd and even partial quotients.
\newblock {\em Arbeitsber. Math. Inst. Univ. Salzburg}, 4:59--70, 1982.

\bibitem[Ser85]{Ser}
Caroline Series.
\newblock The modular surface and continued fractions.
\newblock {\em J. London Math. Soc.}, 2(1):69--80, 1985.

\bibitem[SW16]{SW16}
Ian Short and Mairi Walker.
\newblock Even-integer continued fractions and the {F}arey tree.
\newblock In {\em Symmetries in Graphs, Maps, and Polytopes: 5th SIGMAP
  Workshop, West Malvern, UK, July 2014}, pages 287--300. Springer, 2016.

\end{thebibliography}
\end{document}